\documentclass[11pt,reqno]{amsart}

\usepackage{floatrow}

\usepackage{adjustbox} 

\usepackage{amsmath}
\usepackage{amssymb}
\usepackage[left=1in,top=1in,right=1in,bottom=1in]{geometry}
\usepackage{epsfig}
\usepackage[normalem]{ulem}
\usepackage[usenames,dvipsnames]{color}
\usepackage{todonotes}
\usepackage[colorlinks, citecolor=black, linkcolor=black, urlcolor=black]{hyperref}
\usepackage{bm}
\usepackage{algorithm}
\usepackage{algpseudocode}

\usepackage{enumitem}

\allowdisplaybreaks

\usepackage{hyperref}
\usepackage{cleveref}
\crefname{subsection}{subsection}{subsections}


\usepackage{tikz}
\usetikzlibrary{backgrounds}
\usetikzlibrary{intersections}


\usepackage{booktabs}
\usepackage{array}
\newcolumntype{N}{>{\centering\arraybackslash}m{1in}}
\newcolumntype{G}{>{\centering\arraybackslash}m{2in}}

\numberwithin{figure}{section}

\newtheorem{theorem}{Theorem}[section]
\newtheorem{lemma}[theorem]{Lemma}

\newcommand\eps{\varepsilon}
\newcommand{\E}{\mathbb E}

\newcommand{\Var}{\mathbb V\textrm{ar}}
\newcommand{\Prob}{\mathbb{P}}

\newcommand{\Nn}{{\mathbb N}}

\newcommand{\scr}{\mathcal}
\newcommand{\mb}{\mathbb}

\theoremstyle{definition}
\newtheorem{remark}[theorem]{Remark}

\def\R{{\mathcal R}}

\setlength{\tabcolsep}{10pt}

\newcommand{\bigo}{\mathcal{O}}

\title[Hopping Forcing Number in Random $d$-regular Graphs]{Hopping Forcing Number in Random $d$-regular Graphs}


\author{Pawe\l{} Pra\l{}at}
\address{Department of Mathematics, Toronto Metropolitan University, Toronto, Canada}
\email{pralat@torontomu.ca}

\author{Harjas Singh}
\address{Department of Mathematics, Toronto Metropolitan University, Toronto, Canada}
\email{harjas.singh@torontomu.ca}

\date{}

\begin{document}
	
	\maketitle
	
	\begin{abstract}
		Hopping forcing is a single player combinatorial game in which the player is presented a graph on $n$ vertices, some of which are initially blue with the remaining vertices being white. In each round $t$, a blue vertex $v$ with all neighbours blue may hop and colour a white vertex blue in the second neighbourhood, provided that $v$ has not performed a hop in the previous $t-1$ rounds. The objective of the game is to eventually colour every vertex blue by repeatedly applying the hopping forcing rule. Subsequently, for a given graph $G$, the hopping forcing number is the minimum number of initial blue vertices that are required to achieve the objective.
		
		In this paper, we study the hopping forcing number for random $d$-regular graphs. Specifically, we aim to derive asymptotic upper and lower bounds for the hopping forcing number for various values of $d \geq 2$. 
	\end{abstract}

	\section{Introduction and Main Results}

	\subsection{Definitions}\label{sec:definitions} 
	
	Zero forcing was introduced in~\cite{zeroForcing2008}, with the purpose of finding bounds on the maximum nullity of a family of matrices associated with any graph. The game starts with a graph $G$ with some vertices coloured \textbf{blue} and the remaining \textbf{white}. An interpretation of this is that blue vertices contain information, whereas the white vertices are devoid of it. The goal is to use repeated applications of a \textbf{colour change rule} with the objective being to (eventually) turn every vertex blue. It quickly gained popularity as a model to study the spread of information on a given graph, with applications being found in several fields including physics and engineering; see, for example~\cite{brueni_heath_2005, burgarth_giovannetti_2007, severini_2008}, survey on the subject~\cite{fallat2014minimum}, and a recent article published in the Notices of the AMS~\cite{hicks71many}.
	
	The standard colour change rule, denoted by $\scr{Z}$, allows a blue vertex $b$ to \textbf{force} a white vertex $w$ to become blue provided that $w$ is the unique white neighbour of $b$. The wide interest in zero forcing has generated a large volume of work in the last few years, including analyzing it for random graphs~\cite{bal2021zero}, analyzing the probabilistic counterpart for deterministic graphs~\cite{behague2020tight} as well as random ones~\cite{english2021probabilistic}. On the other hand, the \textbf{hopping forcing rule} $\scr{H}$, first added to the set of existing colour change rules in~\cite{barioli_et_al_2012}, allows a blue vertex $b$ to force a white vertex $w$ in the second neighbourhood of $b$ to become blue provided that $b$ has not performed a force yet and each vertex in the first neighbourhood of $b$ is already blue. For a given graph $G$, the \textbf{hopping forcing number} $H(G)$ denotes the cardinality of the set with the minimum number of initial blue vertices, such that all the vertices of $G$ can eventually be coloured blue.
 
	We borrow the notation used in~\cite{carlson2022hopping} and call a blue vertex $b$ at \textbf{step} $t$ \textbf{dormant} if it has neither performed a force yet nor is able to (the reason being that either some neighbour of $b$ is still white or all vertices in the second neighbourhood of $b$ are already blue), \textbf{active} if it has not yet performed a force but is able to, or \textbf{extinct} if it has already performed a force. 
	
	Furthermore, it is important to note that although several vertices can hop simultaneously at step $t$, for our purpose we only consider \textbf{sequential} hopping, that is, at any given step there will be exactly one vertex performing a force, and if no vertex can hop but there are still some vertices that are white, then the process terminates unsuccessfully. 

\medskip
  
	Now, we set up our set notation for this process and  carefully lay out the steps. Let $B_1$ be the set of vertices that are initially blue. At the beginning (that is, before the player makes her move) of time step $t \ge 1$, let $B_t$ denote the set of all blue vertices (either active, dormant or extinct), $R_t \subseteq B_t$ denote the set of vertices that are active (blue vertices that have not yet performed a force whose first neighbourhood is entirely blue and that have at least one white vertex in the second neighbourhood), and $W_t$ denote the set of white vertices. Then, at each sequential hop, the player selects a vertex $x_t \in R_t$ and performs a single force by hopping from $x_t$ to $y_t \in N_2(x_t) \cap W_t$, where $N_2(x_t)$ denotes the second neighbourhood of $x_t$ , that is, the set of vertices that are at distance exactly two from $x_t$. Subsequently, the sets $B_t$, $R_t$, and $W_t$ are updated accordingly. In particular, 
 	\begin{align*}
		B_{t+1} = B_t\cup \{y_t\} \\
		W_{t+1} = W_t\setminus \{y_t\}.
	\end{align*}
 Clearly, vertex $x_t$ needs to be removed from $R_t$ since it became extinct but more updates might be necessary as potentially more vertices change their status from active to dormant or vice versa.

    Note that a successful \textbf{strategy} $\scr{S}$ can be defined as an initial set $B_1$ of blue vertices and a sequence of $\ell$ hops, $x_1 \rightarrow y_1$, $x_2 \rightarrow y_2, \ldots, x_{\ell} \rightarrow y_{\ell}$ where, at the end of the process, every vertex is blue. Such initial sets $B_1$ will be called \textbf{feasible}. Clearly, $B_1 = V(G)$ is feasible which shows that the hopping forcing number is well defined and that $H(G) \le |V(G)|$. On the other hand, $H(G) \ge 1$ since $B_1 = \emptyset$ is infeasible (unless $G$ is the null graph, that is, $V(G)=\emptyset$ in which case, trivially, $H(G)=0$). Furthermore, out of several feasible sets $B_1^1, B_1^2,\ldots$, the set(s) with least cardinality can be called \textbf{optimal} and this cardinality is the hopping number $H(G)$. Observe that given a graph $G$ and $B_1$, both feasibility and optimality can be ascertained at the beginning of the process, that is, this is a `one person game' with perfect information and no randomness. Finally, since the optimal sequential hopping starts with $|V(G)| - H(G)$ white vertices and at each step one of them changes colour to blue, the length of the process is equal to $|V(G)| - H(G)$.
    
    More importantly, the set of initial blue vertices, $B_1$, can be updated in an online fashion, that is, we may try to perform the desired sequence of hops and append some vertices to the initial set dynamically instead of knowing the set $B_1$ a priori. Indeed, we may start with $B_1 = \emptyset$. Then, at time $t$, if $x_t$ is not extinct and $y_t$ is white and in $N_2(x_t)$, then we may add $x_t$ (if needed) and all white neighbours of $x_t$ (if there are any) to $B_1$ so that the desired force can be performed.
		
	\subsection{Main Results} 
	
In this paper, we establish various asymptotic upper and lower bounds for the hopping number of random $d$-regular graph $\mathcal{G}_{n,d}$ (see Subsection~\ref{sec:config} for the definition and more details on this model). We say that a random graph has property $P$ \emph{asymptotically almost surely} (or a.a.s.) if the probability that it has property $P$ tends to $1$ as $n$ goes to infinity (see Subsection~\ref{sec:notation} for more on asymptotic notation used in this paper).

\bigskip

Understanding random $2$-regular graphs is easy. In Section~\ref{sec:2reg}, we prove the following result.
	\begin{theorem}\label{thm:2reg}
	A.a.s.\ $H(\mathcal{G}_{n,2}) \sim (3/2) \log n$.
	\end{theorem}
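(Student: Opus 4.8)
The plan is to exploit the fact that $\mathcal{G}_{n,2}$ is a disjoint union of cycles — in the configuration model its components are cycles of every length $\ge 1$, a length-$1$ component being a loop and a length-$2$ component a pair of parallel edges — and that the hopping process can never interact across components, since for any vertex $v$ both its neighbours and the whole of $N_2(v)$ lie in the same component as $v$. Hence a set $B_1$ is feasible if and only if its restriction to each component is feasible, and $H$ is additive over components: $H(\mathcal{G}_{n,2}) = \sum_C H(C)$.

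First I would pin down $H$ of a single cycle. For $k \ge 3$ one has $H(C_k) = 3$. For the upper bound, place the three initially blue vertices consecutively, say $\{0,1,2\}$ with vertices labelled $0,\dots,k-1$ around the cycle; then, for $t = 1,2,\dots,k-3$ in turn, vertex $t$ has both neighbours $t-1$ and $t+1$ blue by induction and has not yet hopped, so it may hop to the still-white vertex $t+2 \in N_2(t)$, and after $k-3$ hops every vertex is blue. For the lower bound, observe that in any successful strategy the first hop $x_1 \to y_1$ requires $x_1$ together with both of its neighbours to be blue already at the start, so $|B_1| \ge 3$ (and for $k = 3$ no hop is ever available, forcing $B_1 = V(C_3)$). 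For the degenerate components one checks directly that no hop is ever available, so a loop contributes $1$ and a pair of parallel edges contributes $2$. In every case $H(C) = \min(|C|,3)$, and therefore
\[
H(\mathcal{G}_{n,2}) \;=\; 3M - 2X_1 - X_2,
\]
where $M$, $X_1$, $X_2$ denote the numbers of components, loops, and parallel-edge pairs of $\mathcal{G}_{n,2}$.

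What remains is to show $M \sim \tfrac12 \log n$ and $X_1 + X_2 = o(\log n)$ a.a.s. For $M$ I would reveal the random pairing of the $2n$ half-edges one edge at a time while maintaining a single ``active chain'' (a maximal alternating path in the union of the within-vertex matching with the edges revealed so far); at each of the $n$ steps, pair a free end of the active chain with a uniformly random unpaired half-edge, opening a fresh chain whenever the active one closes up. Before the $k$-th step there are $2n-2k+1$ candidate partners, exactly one of which completes a cycle, so $M = \sum_{k=1}^n Z_k$ with the $Z_k$ independent and $Z_k \sim \Ber\big(1/(2n-2k+1)\big)$. Hence $\E[M] = \sum_{\ell=1}^n 1/(2\ell-1) = \tfrac12 \log n + O(1)$ and $\Var[M] \le \E[M] = O(\log n)$, and Chebyshev's inequality yields $M \sim \tfrac12 \log n$ a.a.s. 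Moreover $\E[X_1] = n/(2n-1) \to \tfrac12$ and $\E[X_2] = \binom{n}{2}\,\frac{2}{(2n-1)(2n-3)} \to \tfrac14$, so Markov's inequality gives $X_1 + X_2 = o(\log n)$ a.a.s. Substituting into the displayed identity gives $H(\mathcal{G}_{n,2}) \sim \tfrac32 \log n$ a.a.s. If $\mathcal{G}_{n,2}$ is instead the uniform \emph{simple} $2$-regular graph, the same conclusion follows by conditioning on simplicity, an event of probability bounded away from $0$, under which $X_1 = X_2 = 0$.

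The argument is short once one has the structural observation that hopping forcing decomposes over the cycle components with each cycle of length $\ge 3$ costing exactly $3$; the only step needing a little care is organising the half-edge exploration so that the indicators ``the $k$-th edge closes a cycle'' are genuinely independent — or, equivalently, combining the asymptotic Poisson law for short cycles with a crude tail bound on the number of long ones, giving the classical $\tfrac12 \log n$ count of cycles in a random $2$-regular graph.
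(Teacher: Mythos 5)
Your proposal is correct and follows essentially the same route as the paper: reduce to counting cycle components (each costing $3$ since $H(C_k)=3$ for $k\ge 3$) and show the number of cycles concentrates at $\tfrac12\log n$ via the sequential pairing, where the $k$-th edge closes a cycle with probability $1/(2n-2k+1)$. You supply several details the paper leaves implicit --- the additivity of $H$ over components, the verification that $H(C_k)=3$, and the treatment of loops and multiple edges in the configuration model --- but the underlying argument is the same.
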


Unfortunately, the $d=2$ case is the only value of $d \ge 2$ for which we determine an asymptotic behaviour of the hopping number. For the remaining ones, we only have some upper and lower bounds---see Table~\ref{tab:UpperAndLowerBoundsHoppingNumber} and Figure~\ref{fig:UpperAndLowerBoundsHoppingNumber}.

{\scriptsize
\begin{table}[h]
\centering
\noindent
\begin{tabular}{N N N N }\toprule
\multicolumn{1}{N }{\textbf{}} & \multicolumn{2}{c }{\textbf{Lower Bounds}} & \multicolumn{1}{c }{\textbf{Upper Bound}}  \\ 
\cmidrule(lr){2-3}
\cmidrule(ll){4-4}
\multicolumn{1}{ N }{\textbf{Degree}} & \multicolumn{1}{ N }{\textbf{Expander Mixing Lemma}} & \multicolumn{1}{ N }{\textbf{Configuration Model}} &  \multicolumn{1}{ N }{\textbf{Contiguous Model}} \\ 
\cmidrule(lr){1-1}
\cmidrule(lr){2-3}
\cmidrule(ll){4-4}
\multicolumn{1}{ c }{$d=3$} & 0.0149 & 0.0699 &  0.3333  \\ 
\multicolumn{1}{ c }{$d=4$} & 0.0372 & 0.1451  &  0.4571 \\ 
\multicolumn{1}{ c }{$d=5$} & 0.0588 & 0.2114 &  0.5341\\ 
\multicolumn{1}{ c }{$d=6$} & 0.0787 & 0.2678 & 0.5884 \\ 
\multicolumn{1}{ c }{$d=7$} & 0.0968 & 0.3158 &  0.6294 \\ 
\multicolumn{1}{ c }{$d=8$} & 0.1134 & 0.3569 &  0.6618  \\ 
\multicolumn{1}{ c }{$d=9$} & 0.1287 & 0.3924 & 0.6882 \\ 
\multicolumn{1}{ c }{$d=10$} & 0.1429 & 0.4235 &   0.7101\\ 
\cmidrule(ll){1-4}
\multicolumn{1}{ c }{$d=20$} & 0.2445 & 0.6054 &   0.8231 \\ 
\multicolumn{1}{ c }{$d=40$} & 0.3755 & 0.7437 &   0.8946\\ 
\multicolumn{1}{ c }{$d=80$} & 0.5556 & 0.8409 &   0.9386 \\ 
\multicolumn{1}{ c }{$d=160$} & 0.6848 & 0.9048 &  0.9649\\ 
\multicolumn{1}{ c }{$d=320$} & 0.7767 & 0.9446 &  0.9803\\ 
\multicolumn{1}{ c }{$d=640$} & 0.8420 & 0.9684 &  0.9890\\ 
\multicolumn{1}{ c }{$d=1280$} & 0.8882 & 0.9823 &   0.9940\\ 
\bottomrule
\end{tabular}
\caption{Comparison of upper and lower bounds for the hopping number for small and large values of $d$.}
\label{tab:UpperAndLowerBoundsHoppingNumber}
\end{table}
}

\begin{figure}
        \centering
       \includegraphics[scale=0.7]{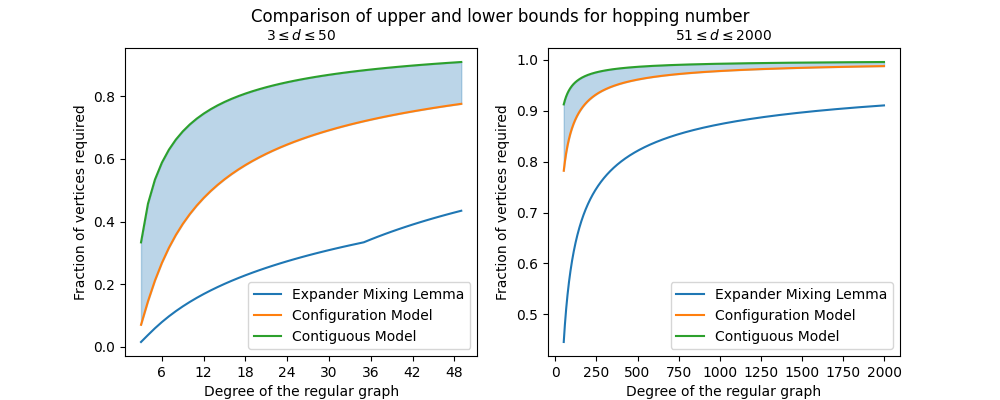}
        \caption{Comparison of upper and lower bounds for the hopping number for small and large values of $d$.}
        \label{fig:UpperAndLowerBoundsHoppingNumber}
\end{figure}
   
Upper bounds are studied in Section~\ref{sec:ge3reg}. An (on-line) algorithm to that we use to create the initial set of blue vertices is universal but the analysis is slightly different in the degenerate case $d=3$. We prove the following upper bound in Subsection~\ref{sec:contiguous_3}.

 	\begin{theorem}\label{thm:contiguous_3}
 A.a.s.\ $H(\mathcal{G}_{n,3}) \le (1+o(1)) \, n/3$.
	\end{theorem}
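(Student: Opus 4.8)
The plan is to design an online algorithm that processes the random $3$-regular graph $\mathcal{G}_{n,3}$ and dynamically adds vertices to the initial blue set $B_1$ only when forced to, aiming to keep $|B_1| \le (1+o(1))\,n/3$. The key structural observation is that in order to perform a hop from an active vertex $x$, \emph{all} of $x$'s neighbours must already be blue; so one natural strategy is to choose a vertex $x$, pay to colour $x$ and its (at most) $3$ neighbours blue, and then use $x$ to hop to a new white vertex $y \in N_2(x)$. This costs roughly $4$ vertices of ``investment'' to gain $1$ new blue vertex via a hop, which is far too expensive. The trick, as suggested by the displayed on-line procedure in Subsection~\ref{sec:definitions}, is to amortise: once a local cluster of vertices has all been turned blue, each of those vertices (in turn, waiting the required one-step cooldown) can itself act as an active vertex and hop to a fresh white vertex, so a single paid cluster of size $k$ can yield roughly $k$ free hops, and then those $k$ new vertices together with their neighbourhoods can be cleaned up and re-used.

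The concrete approach I would take is to run a breadth-first-style exploration of $\mathcal{G}_{n,3}$ using the configuration model: reveal pairings one at a time as the algorithm needs them. I would maintain a ``frontier'' of blue vertices that are eligible (or soon eligible) to hop. Initially pay to put a small seed set into $B_1$ together with all its neighbours. Then repeatedly: take an active blue vertex $x$ whose cooldown has expired, reveal the second neighbourhood $N_2(x)$, and hop to some white $y \in N_2(x)$; add $y$ to the blue set for free. The subtlety is that to keep hopping we must ensure the blue region keeps having vertices all of whose neighbours are blue — i.e. we need the blue set to contain ``closed'' pieces. So the real algorithm alternates between a \emph{growth} phase (spend a bounded number of paid vertices to create a new closed blue cluster at the boundary) and an \emph{exploitation} phase (let every vertex of that cluster hop once, each producing a new blue vertex essentially for free, then fold these new vertices and the few extra paid neighbours into the next closed cluster). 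A careful accounting should show that the number of \emph{paid} vertices is a $(1/3 + o(1))$ fraction: intuitively, among any three consecutive vertices discovered, on average one is paid and two are acquired by hops, because each closed cluster of blue vertices of size $k$ needs about $k$ extra paid neighbours to close up but then produces about $k$ hopped vertices and those hopped vertices supply most of the next cluster.

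I would formalise the accounting with a potential/deficiency argument: track the quantity $\Phi_t = (\text{number of white vertices at time } t)$ minus an appropriate multiple of the number of currently-usable active blue vertices, and show the algorithm can always make progress while the fraction of vertices ever added to $B_1$ stays below $1/3 + o(1)$. To control the randomness I would use the standard fact that the first $o(n)$ (indeed, up to a constant fraction less than the number needed for the analysis to break) pairing choices in the configuration model are, with the relevant probability, ``tree-like enough'': the exploration a.a.s.\ does not create short cycles or collisions that would obstruct the closing-up step, except for a negligible $o(n)$ fraction of steps, which we can absorb into the $o(1)$ error by simply paying for any vertex involved in such a bad event. The a.a.s.\ statement then follows because the expected number of bad events is $o(n)$ (or one can invoke concentration via a bounded-differences / Azuma argument on the exploration process, or use the contiguity of $\mathcal{G}_{n,3}$ with a union of random perfect matchings, which is presumably why the column in Table~\ref{tab:UpperAndLowerBoundsHoppingNumber} is labelled ``Contiguous Model'').

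The main obstacle I expect is the clean-up/re-use step: after a cluster of $k$ blue vertices each hop once, the $k$ new blue vertices are scattered in $N_2$ of the cluster and need not themselves be close to each other or have predominantly blue neighbourhoods, so turning them into the next closed cluster without paying for too many of their neighbours is the crux. Making the $1/3$ constant come out exactly — rather than some worse constant like $1/2$ — requires arranging the exploration so that the hopped vertices are reused maximally; I would handle this by choosing hop targets greedily to be vertices that already have many blue neighbours (which the configuration-model revelation lets us arrange, since we control which pairings to expose), and by processing the graph in a long path/necklace-like order so that consecutive clusters overlap in their neighbourhoods. A secondary technical point is correctly handling the one-round cooldown constraint in the sequential model, but since we are free to interleave hops from different clusters, there will always be an eligible vertex to hop, so the cooldown never actually stalls the process; this just needs to be stated carefully.
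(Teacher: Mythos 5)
There is a genuine gap: you never actually derive the constant $1/3$, and the mechanism you sketch would not produce it. The paper's argument hinges on the contiguous model in a very specific way: the graph is realised as a Hamilton cycle $(v_1,\dots,v_n)$ plus a perfect matching, and the algorithm insists that each $v_t$ attempt to hop \emph{through its cycle-successor} $v_{t+1}$. This gives a canonical linear processing order in which the event ``$v_t$ fails to hop'' is exactly the conjunction of two nearly independent events, each of probability about $t/n$ (the matching-neighbour of $v_{t+2}$ and the matching-neighbour of $v_{t+1}$ both landing among the already-processed vertices $v_1,\dots,v_t$). Hence the failure probability at step $t$ is $\approx(t/n)^2$, and the expected number of paid vertices is $\sum_t (t/n)^2 \approx n\int_0^1 x^2\,dx = n/3$. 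In particular the $1/3$ is \emph{not} a uniform ``one paid vertex in every three'' amortisation: early in the process essentially every hop succeeds and nothing is paid, while near the end essentially every hop fails and everything is paid; the constant is the integral of $x^2$. Your local accounting heuristic --- a closed cluster of size $k$ costs about $k$ paid neighbours and yields about $k$ free hops --- would, if anything, suggest a paid fraction of $1/2$, and you yourself identify the re-use of the scattered hopped vertices as an unresolved crux. Without the explicit Hamilton-cycle order (or some equally concrete replacement) there is no computation that lands on $1/3$.

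Your instincts are in the right neighbourhood --- the online construction of $B_1$, the exploration via the configuration model, and the remark about processing the graph in a ``long path/necklace-like order'' all point toward the paper's actual strategy --- but the proposal stops precisely where the proof has to begin. Two further points would need attention even after fixing the main gap: (i) in the $d=3$ case each vertex has only one matching-neighbour, which is what makes the failure event a clean product of two probabilities $t/(n-1)\cdot t/(n-3)$ (the paper calls this the degenerate case); and (ii) concentration must be established --- the paper does this by a direct second-moment computation showing $\Var[X]=\bigo(n)=o((\E X)^2)$, whereas your appeal to Azuma/bounded differences is plausible but would require verifying a Lipschitz condition for the exploration martingale that you have not set up.
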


In Subsection~\ref{sec:contiguous_4}, we analyze the algorithm for $d \ge 4$ yielding the following upper bounds:

 	\begin{theorem}\label{thm:contiguous_4andabove}
	For any integer $d \ge 4$, a.a.s.\ $$H(\mathcal{G}_{n,d}) \le (1+o(1)) \,\frac{(d-1)!(d-2)^{d-1}}{\prod_{i=1}^{d-1}(i(d-2)+1)} \, n.$$
	\end{theorem}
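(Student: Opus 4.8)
The plan is to exhibit an explicit online algorithm that builds the initial blue set $B_1$ while running a successful hopping sequence on $\mathcal{G}_{n,d}$, and to analyze its cost via a branching/exploration process in the configuration model, transferring the conclusion to $\mathcal{G}_{n,d}$ through contiguity. The guiding idea for $d \ge 4$ should be the following: a hop from an active vertex $x$ requires that \emph{all} $d$ neighbours of $x$ already be blue and that $x$ itself has not yet hopped; so if we can afford to pay, vertex by vertex, to turn a whole ball of radius $1$ blue, we then get one ``free'' hop to a vertex at distance $2$. Because the graph is locally treelike (girth growing, a.a.s.\ only $o(n)$ short cycles in the configuration model), the relevant local structure around a fresh vertex is a $d$-regular tree, and the recursion governing how much of the tree we must pay for before the hopping chain becomes self-sustaining is a deterministic tree recursion; its solution is exactly the constant $\frac{(d-1)!(d-2)^{d-1}}{\prod_{i=1}^{d-1}(i(d-2)+1)}$ appearing in the statement.

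More concretely, I would run the process in ``phases'': maintain the current blue set, and as long as there is a blue vertex $v$ that is dormant only because some neighbours are white, process $v$ by paying to colour those white neighbours blue (adding them to $B_1$), which activates $v$ and lets it hop once to a vertex $y \in N_2(v) \cap W$ — this colours $y$ \emph{for free}. The new vertex $y$ then spawns its own demand for blue neighbours, and so on. Formalize this as an exploration of the local tree: root the tree at an initially paid vertex; at depth $j$ a vertex has $d-1$ children; paying for a vertex ``buys'' one free grandchild via a hop, but each child must itself eventually be turned blue, either by being paid for or by being the free target of a hop from higher up. Track the proportion of vertices that end up paid versus free as the tree is explored to depth $\Theta(\log_{d-1} n)$; because of the locally-treelike structure this proportion concentrates, and the limiting fraction of paid vertices is the claimed constant. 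The product $\prod_{i=1}^{d-1}(i(d-2)+1)$ is the hallmark of a recursion of the form $a_k = \frac{a_{k-1}}{1 + \text{something}/(\cdots)}$, so I expect the combinatorial bookkeeping to reduce to verifying that the ``paid fraction'' satisfies precisely this recursion and telescopes to the stated closed form.

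The technical backbone is: (i) set up the configuration model $\mathcal{G}_{n,d}$ (Subsection~\ref{sec:config}) and recall that a.a.s.\ it contains at most $O(\log n)$ short cycles, so a constant fraction of vertices — indeed, all but $o(n)$ — have a $d$-regular tree as their depth-$\Theta(\log n)$ neighbourhood; (ii) describe the online greedy strategy above precisely, showing it is always able to make progress (it never gets stuck with white vertices remaining, because we can always pay for whatever is needed), so it certifies feasibility of the $B_1$ it constructs; (iii) bound $|B_1|$ by summing the ``paid'' vertices over the explored regions, using a deferred-decisions / principle-of-deferred-revelation argument so that the neighbourhoods are revealed on the fly and the exploration looks like a branching process; (iv) show concentration of $|B_1|/n$ around the constant via a second-moment or Azuma-type argument (the per-step exposure changes things by $O(1)$), or by a differential-equations-method tracking of the fraction of blue/white vertices of each ``type''; (v) finally, invoke contiguity between the configuration model conditioned on simplicity and the uniform random $d$-regular graph (for fixed $d$, this is the standard Bollob\'as/Wormald fact that the probability of simplicity is bounded away from $0$) to push the a.a.s.\ upper bound from one model to the other.

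The main obstacle, I expect, is step (iv) together with making the tree recursion in (iii) rigorous: the greedy process does not literally explore disjoint trees, and once hops start cascading, the ``free'' targets of different branches can collide, and short cycles can cause the paid set to be larger than the idealized tree computation suggests. The right way around this is to be conservative — always pay whenever there is any ambiguity or any collision with previously explored structure, absorbing the $o(n)$ cycle-affected vertices into the error term — so that $|B_1|$ is stochastically dominated by the cost in the idealized tree model; then one only needs an \emph{upper} bound on the branching-process cost, which is cleaner than two-sided concentration. Verifying that the idealized recursion has solution $\frac{(d-1)!(d-2)^{d-1}}{\prod_{i=1}^{d-1}(i(d-2)+1)}$ — and separately handling $d=3$, where the tree is subcritical-ish and the constant degenerates to $1/3$ as in Theorem~\ref{thm:contiguous_3} — is then a finite, if delicate, calculation.
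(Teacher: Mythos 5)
There is a genuine gap here, and it sits at the heart of your argument: the claim that a \emph{local} branching-process/tree recursion produces the constant $\frac{(d-1)!(d-2)^{d-1}}{\prod_{i=1}^{d-1}(i(d-2)+1)}$ is asserted but never derived, and I do not believe it can be. In the paper this constant arises as
$\int_0^1 \bigl(1-(1-x)^{d-2}\bigr)^{d-1}\,dx$, where the integrand is the probability that a hop \emph{fails} at the moment a fraction $x$ of the vertices has already been processed. That quantity is inherently global and time-inhomogeneous: it is the chance that each of $d-1$ relevant vertices has at least one of its $d-2$ ``random'' neighbours landing in the already-processed set of size $xn$. A depth-indexed recursion on the $(d-1)$-ary tree has no parameter playing the role of $x$, so there is no mechanism by which your ``paid fraction'' could integrate this failure probability over $x\in[0,1]$. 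Your reading of $\prod_{i=1}^{d-1}(i(d-2)+1)$ as the footprint of a telescoping recursion is a plausible-looking guess, but the product actually comes from the beta-function-style evaluation of that integral (e.g.\ for $d=4$, $\int_0^1(1-u^2)^3\,du=16/35$). Relatedly, your algorithm is underspecified in exactly the place that determines the constant: you never fix which distance-$2$ vertex a newly activated vertex hops to, and the cost of the strategy depends delicately on that choice --- the paper's own Section~\ref{sec:DEattempt} analyzes a different (degree-greedy) local strategy for $d=3$ and gets the strictly worse constant $0.4841$ instead of $1/3$, which shows that ``some reasonable greedy local rule'' does not automatically hit the stated bound.

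The paper's actual route is different in a way worth internalizing: it uses the contiguity decomposition of Subsection~\ref{sec:contiguous}, viewing $\mathcal{G}_{n,d}$ as a Hamilton cycle $(v_1,\dots,v_n)$ plus an independent random $(d-2)$-regular graph, and runs a single deterministic sweep in which $v_t$ always tries to hop \emph{through} $v_{t+1}$. The initial set $B_1$ is then exactly the set of indices $t$ at which the sweep fails, $X=\sum_t X_t$, and $\Prob(X_t=1)$ is computed by exposing only the $(d-2)$-regular pairing at $v_{t+1}$ and $v_{t+2}$; concentration is a routine second-moment computation because $X_k$ and $X_\ell$ are nearly independent for $|k-\ell|\ge 2$. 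This sidesteps entirely the collision/short-cycle issues you flag as your ``main obstacle,'' because the Hamilton cycle imposes a global linear order and each failure event depends only on $O(d)$ freshly exposed pairs. If you want to salvage your plan, you would need to (a) commit to a concrete hopping rule, (b) exhibit the recursion it induces and show its solution is the stated constant, and (c) justify feasibility of the resulting $B_1$ --- and step (b) is where I expect the approach to break down rather than merely require more care.
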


	Note that the constant in the upper bound can be estimated as follows:
	\begin{eqnarray*}
	\frac{(d-1)!(d-2)^{d-1}}{\prod_{i=1}^{d-1}(i(d-2)+1)} &=& \prod_{i=1}^{d-1} \frac{i(d-2)}{i(d-2)+1} ~=~ \prod_{i=1}^{d-1} \left(1 - \frac{1}{i(d-2)+1} \right) \\
	&=& \exp \left( - \Theta \left( \sum_{i=1}^{d-1} \frac{1}{id} \right) \right) ~=~ \exp \left( - \Theta \left( \frac{\log d}{d} \right) \right) \\
	&=& 1 - \Theta \left( \frac{\log d}{d} \right).
	\end{eqnarray*}
	In particular, it shows that it tends to one as $d \to \infty$. 
	
	\medskip
	
	To get a lower bound for $H(\mathcal{G}_{n,d})$ that explicitly tends to one as $d \to \infty$, we use the expansion properties of random $d$-regular graphs to get a lower bound of $1 - \Theta ( 1 / \sqrt{d} )$. Indeed, in Section~\ref{sec:lower_bounds}, we prove the following result. (See Subsection~\ref{sec:def_expansion} for the definition of $\lambda(G)$.)

\begin{theorem}\label{upperBoundExpanderMixing}
Let $G=(V,E)$ be a $d$-regular graph with $n$ vertices and set $\lambda = \lambda(G)$. Then,
$$
H(G) \ge \max \left( 1 - \frac {2\lambda}{d}, \frac {d-\lambda}{d+3\lambda} \right) n ~~=~~ \left( 1 - \min \left( \frac {2 \lambda}{d}, \frac {4\lambda}{d+3\lambda} \right) \right) n.
$$ 

As a result, for any $d \ge 3$ and $\eps > 0$, a.a.s. 
$$
H(\mathcal{G}_{n,d}) \ge \left( 1 - \min \left( \frac {4 \sqrt{d-1}}{d}, \frac {8\sqrt{d-1}}{d+6\sqrt{d-1}} \right) -\eps \right) n.
$$ 
\end{theorem}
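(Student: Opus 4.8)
The plan is to prove the two lower bounds $H(G) \ge (1 - 2\lambda/d) n$ and $H(G) \ge \frac{d-\lambda}{d+3\lambda} n$ separately, in each case exploiting the Expander Mixing Lemma to control edge counts between vertex subsets. The deterministic statement will then give the random statement upon substituting the known a.a.s.\ bound $\lambda(\mathcal{G}_{n,d}) \le 2\sqrt{d-1} + \eps$ (Friedman's theorem), so the bulk of the work is the deterministic inequality.

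For the first bound, let $B_1$ be an optimal feasible set, with $|B_1| = H(G)$, and let $W = V \setminus B_1$ be the set of vertices that are white at the start, so $|W| = n - H(G)$; the aim is to show $|W| \le (2\lambda/d) n$. The key structural observation is that the hopping rule is highly restrictive: the \emph{first} hop performed in the whole process must be executed by some active vertex $x_1 \in B_1$, which requires that \emph{every} neighbour of $x_1$ already lies in $B_1$. I would try to push this further: consider the set $S$ of vertices that get coloured by a hop originating from a vertex all of whose neighbours were white-free at the moment of the hop, and argue that each newly blued vertex $y_t$ is ``charged'' to a fully-blue closed neighbourhood, so the white set $W$ must be ``small'' in the sense that there are many $B_1$-vertices with no neighbour in $W$. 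More concretely, since $W$ is the set of \emph{ever}-white vertices, every vertex of $W$ is eventually forced, and the very first white vertex $y_1$ to be forced has a neighbour $x_1$ with $N[x_1] \subseteq B_1$, i.e.\ $x_1$ has no neighbour in $W$; generalizing, one shows that at least one vertex on the ``boundary'' of $W$ has no $W$-neighbour at all, which forces $e(W, \overline{W})$ to be smaller than the Expander Mixing Lemma would generically predict. The Expander Mixing Lemma gives
\[
\left| e(W, \overline{W}) - \frac{d}{n}|W|\,|\overline{W}| \right| \le \lambda \sqrt{|W|\,|\overline{W}|},
\]
and combining a lower bound on $e(W,\overline W)$ coming from the forcing structure (every vertex of $W$ needs an edge to a vertex that is blue when it is forced) with this concentration bound yields, after rearranging, $|W| \le (2\lambda/d)n$.

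For the second bound I expect a more refined counting argument. Here one should track not just $W$ but the partition of $B_1$ itself according to whether a vertex has a neighbour in $W$; write $B_1 = A \cup A'$ where $A$ is the set of $B_1$-vertices with at least one neighbour in $W$ and $A'$ those with none. A vertex can only ever hop if all its neighbours are blue, and the first hop needs an $A'$ (or similar) vertex; by examining the degree/edge constraints — $A'$ sends all $d|A'|$ edge-endpoints into $B_1$, while $W$ has $|W|\cdot d$ endpoints distributed between $A$, $W$, and later-blued vertices — and feeding the cross-edge counts $e(A', W) = 0$, $e(A, \overline A)$, etc., into the Expander Mixing Lemma, one obtains a second linear inequality between $|W|$ and $n$. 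Optimizing the use of the mixing lemma over the relevant set (the worst case should be $W$ itself of size roughly $\frac{3\lambda}{d+3\lambda} n$, hence the denominator $d + 3\lambda$) gives $H(G) = n - |W| \ge \frac{d-\lambda}{d+3\lambda} n$. The algebraic identity $1 - \min(2\lambda/d, 4\lambda/(d+3\lambda)) = \max(1 - 2\lambda/d, (d-\lambda)/(d+3\lambda))$ is then a one-line check: the two expressions $1 - 2\lambda/d$ and $(d-\lambda)/(d+3\lambda)$ cross at $\lambda = d/3$, and on each side the stated minimum picks out the correct branch.

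The main obstacle is isolating precisely the combinatorial ``deficiency'' in the edge count forced by the hopping rule — i.e.\ turning the qualitative fact ``some vertex must have a fully-blue neighbourhood in order for the process to start (and continue)'' into a quantitative lower bound on $e(W, \overline{W})$ (or an upper bound on the number of edges inside $W$, or inside $B_1 \setminus \{$boundary$\}$) that is strong enough to beat the $\lambda\sqrt{|W|\,|\overline W|}$ error term. Once the right edge inequality is pinned down, both parts reduce to plugging into the Expander Mixing Lemma and solving a quadratic/linear inequality; the passage to $\mathcal{G}_{n,d}$ is then immediate from the standard a.a.s.\ bound $\lambda(\mathcal{G}_{n,d}) \le 2\sqrt{d-1} + o(1)$, noting $(d-1) < d$ only weakens constants and can be absorbed into $\eps$.
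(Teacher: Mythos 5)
There is a genuine gap, and you have correctly located it yourself: you never pin down the quantitative edge-deficiency that the hopping rule forces, and the candidate you propose cannot work. You focus on the \emph{initial} partition $B_1$ versus $W = V\setminus B_1$ and hope to show that $e(W,\overline W)$ is anomalously small. But the hopping rule imposes essentially no constraint on edges between the initial white set and the initial blue set; the only structure you extract (``at least one vertex on the boundary of $W$ has no $W$-neighbour'') removes at most $d = O(1)$ edges from the count, which is hopeless against the $\lambda\sqrt{|W||\overline W|} = \Theta(n)$ error term in the Expander Mixing Lemma. The same objection applies to your sketch of the second bound: $e(A',W)=0$ is true by definition of $A'$ and carries no information unless you can show $A'$ is large, which you do not.

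The idea the paper uses, and which your proposal is missing, is to look at the state of the process at its \emph{midpoint} rather than at the initial configuration. If $H(G)\le k$, the process performs $n-k$ forces; stop it after $(n-k)/2$ of them. At that moment the set $S$ of extinct vertices (those that have already hopped) and the set $T$ of still-white vertices each have size exactly $(n-k)/2$, and crucially there is \emph{no edge at all} between $S$ and $T$: a vertex may only hop once its entire neighbourhood is blue, and the white set only shrinks thereafter. This produces two disjoint sets of linear size with zero edges between them (Lemma~\ref{lem:big_sets}), and \emph{that} is the statement strong enough to contradict expansion. The first bound then follows by applying the Expander Mixing Lemma directly to $S,T$; the second by writing $|E(S,T)|=\tfrac12\bigl(|E(S,V\setminus S)|+|E(T,V\setminus T)|-|E(U,V\setminus U)|\bigr)$ with $U=V\setminus(S\cup T)$, $|U|=k$, and using the one-sided bisection estimates, which is where the denominator $d+3\lambda$ arises. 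Your algebraic reconciliation of the two branches of the $\max$/$\min$ and the passage to $\mathcal{G}_{n,d}$ via Friedman's theorem are fine, but without the midpoint lemma neither deterministic bound gets off the ground.
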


In Section~\ref{sec:lower_bounds_config}, the above lower bound is strengthened by applying the configuration model to get the following, stronger but implicit and numerical, lower bound.
\begin{theorem}\label{upperBoundConfig}
For a given integer $d \ge 3$, let 
\begin{eqnarray*}
g_d(x,z) &=& \left( \frac {d}{2} -1 - dz \right) x \log(x) + (d-1) (1-x) \log \left( \frac {1-x}{2} \right) \nonumber \\
&& - \, 2dxz\log(z) \nonumber \\
&& - \, \frac {(1-2z)dx}{2} \log(1-2z) \nonumber \\
&& - \, d \left( \frac{1-x}{2}-zx \right) \log \left( \frac{1-x}{2}-zx \right). \label{eq:gd_function}
\end{eqnarray*}
For a fixed $x \in (0,1)$, function $g_d(x,z)$ is maximized at
\begin{equation*}
z_0(x) := \frac {1 - \sqrt{ 1-2(1-x)x }}{2x}\,. \label{eq:z0}
\end{equation*}
 Fix $\eps > 0$. Let $x_d$ be the unique $x \in (0,1)$ for which $h_d(x)=g_d(x,z_0(x))=0$. Then, a.a.s.\ 
$$
H(\mathcal{G}_{n,d}) > (x_d - \eps)n. 
$$
\end{theorem}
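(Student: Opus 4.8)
The plan is to prove a lower bound on $H(\mathcal{G}_{n,d})$ via a first-moment argument in the configuration model: we show that, for $x$ slightly below $x_d$, the expected number of feasible sets $B_1$ of size $\lfloor xn \rfloor$ tends to $0$, hence a.a.s.\ no feasible set of that size exists, so $H(\mathcal{G}_{n,d}) > (x_d - \eps) n$. Since a.a.s.\ statements in $\mathcal{G}_{n,d}$ follow from statements holding with probability $1 - o(1)$ in the configuration model (the simplicity probability is bounded away from $0$), it suffices to work in the configuration model throughout.

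First I would set up a necessary condition for a set $S = B_1$ of size $xn$ to be feasible. The key structural observation is about the \emph{first} hop: the vertex $x_1$ performing it must be active, meaning its entire (closed) neighbourhood is already blue, i.e.\ lies in $S$. More usefully, I would track the process and extract a deterministic combinatorial bottleneck; the natural candidate, given the shape of $g_d$, is to look at the configuration of blue vs.\ white vertices and the edges between them at the moment of the first force, parametrising by $x$ (fraction blue) and by $z$ (roughly, the fraction of a blue vertex's stubs, or of the blue-white interface, that must be "spent" to enable a first hop). One sums over all ways to choose which $xn$ vertices are blue, which blue vertex is the first forcer together with its fully-blue neighbourhood, and how the remaining stubs are paired; the entropy of these choices, computed via Stirling's approximation applied to the relevant multinomial and perfect-matching counts, is exactly $n \cdot g_d(x,z) + o(n)$ after collecting terms. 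Maximising over the free parameter $z$ for fixed $x$ — an elementary calculus exercise whose solution is the stated $z_0(x)$ (obtained by differentiating $g_d(x,z)$ in $z$ and solving the resulting quadratic) — yields $h_d(x) = g_d(x, z_0(x))$ as the exponential rate.

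Next I would verify the analytic facts asserted in the statement: that $h_d$ has a unique zero $x_d \in (0,1)$, with $h_d(x) < 0$ for $x < x_d$. This requires checking the sign of $h_d$ near the endpoints (as $x \to 0^+$ the dominant term is $(d-1)(1-x)\log((1-x)/2) \to -\log 2 < 0$, so $h_d < 0$ there; as $x \to 1^-$ one checks $h_d > 0$) together with monotonicity of $h_d$ on the relevant range — likely via showing $h_d'$ does not vanish, or that $g_d$ is concave in $x$ along the curve $z = z_0(x)$. Given this, for any $\eps > 0$ and $x = x_d - \eps$ we have $h_d(x) < 0$, so the expected number of feasible sets of size $\le xn$ is at most $\mathrm{poly}(n) \cdot e^{n(h_d(x) + o(1))} = o(1)$; Markov's inequality then finishes the argument, and transferring back to $\mathcal{G}_{n,d}$ costs only a constant factor in probability.

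The main obstacle I expect is pinning down the \emph{correct} necessary condition that a feasible set must satisfy and showing the first-moment count over witnesses of this condition genuinely has rate $g_d(x,z)$ — i.e.\ justifying that the bottleneck really is the local configuration at the first force and that nothing cheaper (a smaller-entropy obstruction) is being missed, while also not over-counting. Getting the bookkeeping of stubs exactly right in the configuration model — separating the stubs of the chosen forcer, its neighbourhood, the blue-blue pairings, blue-white pairings, and white-white pairings, and matching them to the five terms of $g_d$ — is the delicate part; the subsequent optimisation over $z$ and the endpoint/monotonicity analysis of $h_d$ are comparatively routine, though they must be done carefully to license the "unique $x_d$" claim.
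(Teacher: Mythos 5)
Your high-level strategy (first moment in the configuration model, Stirling's formula, optimisation over a free parameter $z$, and locating the zero of $h_d$) matches the paper's, but the core combinatorial ingredient is missing, and the candidate you offer for it would not work. The necessary condition the paper counts is not about the first force: it is that if $H(G)\le k$ then, freezing the process at time $t=(n-k)/2$, the set $S$ of \emph{extinct} vertices and the set $T$ of still-white vertices each have cardinality exactly $(n-k)/2$, and there is \emph{no edge between $S$ and $T$} (a vertex only forces when its whole neighbourhood is blue, and the white set only shrinks, so no extinct vertex ever has a white neighbour). Hence $V$ splits into $S,T,U$ with $|S|=|T|=(1-x)n/2$, $|U|=xn$ and $E(S,T)=\emptyset$, and the first moment is taken over such \emph{partitions}. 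This is what produces the $(d-1)(1-x)\log\bigl(\tfrac{1-x}{2}\bigr)$ term (from $\binom{(1-x)n}{(1-x)n/2}$ together with the matching counts), and the parameter $z$ is the fraction $|E(U,T)|/(dxn)$; a twin parameter $y=|E(U,S)|/(dxn)$ is optimised away by a symmetry/derivative argument showing the maximum occurs at $y=z$, which is how $g_d(x,z)=f_d(x,z,z)$ arises.

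By contrast, your proposed witness --- some vertex with its closed neighbourhood inside $B_1$, plus the local stub configuration at the first force --- is far too weak to support a first-moment argument: for constant $x$, a typical set of size $xn$ already contains the closed neighbourhood of about $x^{d+1}n$ vertices, so the expected number of such witnesses over all $\binom{n}{xn}$ sets is exponentially large, not $o(1)$. You correctly flag ``pinning down the correct necessary condition'' as the main obstacle, but that obstacle is precisely where the proof lives, so the proposal as written has a genuine gap. (A minor further point: your endpoint computation $\lim_{x\to0}h_d(x)=-\log 2$ is off; since $z_0(x)\to 1/2$, the last term of $g_d$ contributes $+(d/2)\log 2$, giving $-(d-2)\log(2)/2$ --- though the sign, which is all you need there, is correct.)
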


\bigskip

We finish the paper with our attempt to get better upper bounds for the hopping number by introducing a degree-greedy algorithm~\cite{wormald2003analysis} to create the initial set of blue vertices and then use the differential equation method to analyze it. Unfortunately, the bounds we obtained using this method turned out to be weaker than the ones we established above. Nevertheless, in Section~\ref{sec:DEattempt} we briefly report our attempt for the case of random 3-regular graphs with the hope that one can modify our algorithm, and use similar techniques to analyze it, to get better bounds than the ones we managed to prove.

	\section{Preliminaries}
	
	\subsection{Notation}\label{sec:notation}
	
	The results presented in this paper are asymptotic by nature. We say that a random graph has property $P$ \emph{asymptotically almost surely} (or a.a.s.) if the probability that it has property $P$ tends to $1$ as $n$ goes to infinity.  Given two functions $f=f(n)$ and $g=g(n)$, we will write $f(n)=\bigo(g(n))$ if there exists an absolute constant $c \in \R_+$ such that $|f(n)| \leq c|g(n)|$ for all $n$, $f(n)=\Omega(g(n))$ if $g(n)=\bigo(f(n))$, $f(n)=\Theta(g(n))$ if $f(n)=\bigo(g(n))$ and $f(n)=\Omega(g(n))$, and we write $f(n)=o(g(n))$ or $f(n) \ll g(n)$ if $\lim_{n\to\infty} f(n)/g(n)=0$. In addition, we write $f(n) \gg g(n)$ if $g(n)=o(f(n))$ and we write $f(n) \sim g(n)$ if $f(n)=(1+o(1))g(n)$, that is, $\lim_{n\to\infty} f(n)/g(n)=1$.
	
	We will use $\log n$ to denote a natural logarithm of $n$. For a given $n \in \Nn := \{1, 2, \ldots \}$, we will use $[n]$ to denote the set consisting of the first $n$ natural numbers, that is, $[n] := \{1, 2, \ldots, n\}$. Finally, as typical in the field of random graphs, for expressions that clearly have to be an integer, we round up or down but do not specify which: the choice of which does not affect the argument.
	
	\subsection{Random $d$-regular Graphs}\label{sec:config}
	
	Our main results refer to the probability space of \emph{random $d$-regular graphs} with uniform probability distribution. This space is denoted $\mathcal{G}_{n,d}$, and asymptotics are for $n\to\infty$ with $d\ge 2$ fixed, and $n$ even if $d$ is odd.
	
	Instead of working directly in the uniform probability space of random regular graphs on $n$ vertices $\mathcal{G}_{n,d}$, we use the \textit{configuration model} of random regular graphs, first introduced by Bollob\'{a}s~\cite{bollobas1980probabilistic}, which is described next. Suppose that $dn$ is even, as in the case of random regular graphs, and consider $dn$ points partitioned into $n$ labeled buckets $v_1,v_2,\ldots,v_n$ of $d$ points each. A \textit{pairing} of these points is a perfect matching into $dn/2$ pairs. Given a pairing $P$, we may construct a multigraph $G(P)$, with loops and parallel edges allowed, as follows: the vertices are the buckets $v_1,v_2,\ldots,v_n$, and a pair $\{x,y\}$ in $P$ corresponds to an edge $v_iv_j$ in $G(P)$ if $x$ and $y$ are contained in the buckets $v_i$ and $v_j$, respectively.
	
	It is an easy fact that the probability of a random pairing corresponding to a given simple graph $G$ is independent of the graph, hence the restriction of the probability space of random pairings to simple graphs is precisely $\mathcal{G}_{n,d}$. Moreover, it is well known that a random pairing generates a simple graph with probability asymptotic to $e^{(1-d^2)/4}$ depending on $d$, so that any event holding a.a.s.\ over the probability space of random pairings also holds a.a.s.\ over the corresponding space $\mathcal{G}_{n,d}$. For this reason, asymptotic results over random pairings suffice for our purposes. One of the advantages of using this model is that the pairs may be chosen sequentially so that the next pair is chosen uniformly at random over the remaining (unchosen) points. For more information on this model, see the survey~\cite{wormald1999models} or any of the books on random graphs~\cite{Bol01,JLR00,KF16}.
	
	\subsection{Contiguous Model}\label{sec:contiguous}
	
	The notion of the union of two random regular graphs on the same vertex set is very useful for proving asymptotic properties of $\mathcal{G}_{n,d}$ with $d \ge 3$. In particular, it is known that, for the purpose of proving statements a.a.s., such a random graph can be viewed as the multigraph formed from the union of a Hamilton cycle and random $(d-2)$-regular graph on the same vertex set; see~\cite[Theorem~4.15]{wormald1999models} for a stronger and more general result. (The probability of multiple edges being created is bounded away from 1, and the resulting graph, conditional upon no multiple edges, is contiguous to a random $d$-regular graph.) 
	
	\subsection{Expansion Properties of Random $d$-regular Graphs}\label{sec:def_expansion}
	
	We will use the expansion properties of random $d$-regular graphs that follow from their eigenvalues. The adjacency matrix $A=A(G)$ of a given a $d$-regular graph $G$ with $n$ vertices, is an $n \times n$ real and symmetric matrix. Thus, the matrix $A$ has $n$ real eigenvalues which we denote by $\lambda_1 \ge \lambda_2 \ge \cdots \ge \lambda_n$. It is known that certain properties of a $d$-regular graph are reflected in its spectrum but, since we focus on expansion properties, we are particularly interested in the following quantity: $\lambda = \lambda(G) = \max( |\lambda_2|, |\lambda_n|)$. In words, $\lambda$ is the largest absolute value of an eigenvalue other than $\lambda_1 = d$. For more details, see the general survey~\cite{hoory2006expander} about expanders, or~\cite[Chapter 9]{alon2016probabilistic}.
	
	The value of $\lambda$ for random $d$-regular graphs has been studied extensively. A major result due to Friedman~\cite{friedman2008proof} is the following:
	\begin{lemma}[\cite{friedman2008proof}]\label{lem:Fri}
		For every fixed  $\varepsilon > 0$ and for $G\in \mathcal{G}_{n,d}$,
		$$
		\Prob( \lambda(G) \le 2 \sqrt{d-1}+ \varepsilon) = 1 - o(1)\,.
		$$
	\end{lemma}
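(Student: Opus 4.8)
The plan is to prove this via the trace (or moment) method, the classical route to bounding the second-largest eigenvalue in absolute value of a random regular graph. Since a connected $d$-regular graph has $\lambda_1 = d$ as a simple top eigenvalue with the all-ones eigenvector (and random $d$-regular graphs are connected a.a.s.), for any even integer $k$ we have
\[
\lambda^k \;\le\; \sum_{i \ge 2} \lambda_i^k \;=\; \mathrm{tr}(A^k) - d^k \;=\; W_k(G) - d^k,
\]
where $W_k(G)$ denotes the number of closed walks of length $k$ in $G$ (the inequality uses that for even $k$ every term $\lambda_i^k$ is nonnegative). Taking expectations over the configuration model and applying Markov's inequality, it then suffices to choose $k = k(n) \to \infty$ for which $\E[W_k(G)] - d^k$ does not exceed, once the $n$-fold eigenvalue multiplicity is absorbed, $(2\sqrt{d-1}+\varepsilon)^k$; this forces $\Prob(\lambda > 2\sqrt{d-1}+\varepsilon) \to 0$.

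First I would compute $\E[W_k]$ by classifying each closed walk according to the subgraph (counted with edge multiplicities) that it traverses, its \emph{shape}. The dominant contribution comes from walks whose traced subgraph is a tree: these are in bijection with length-$k$ closed walks rooted at a vertex of the infinite $d$-regular tree $T_d$, whose spectral radius is exactly $2\sqrt{d-1}$. Summing over the $n$ choices of root, the tree-like walks contribute $(1+o(1))\, n\, (2\sqrt{d-1})^k$ to $\E[W_k]$, provided $k$ grows slowly enough that the region explored by a typical length-$k$ walk in $\mathcal{G}_{n,d}$ is still tree-like. This is precisely what produces the threshold $2\sqrt{d-1}$.

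The substance of the argument is bounding the contribution of walks whose traced subgraph contains cycles, i.e.\ has positive \emph{excess} (more edges than vertices). Each unit of excess gains a combinatorial factor from the extra walk freedom but pays a probability penalty of order $1/n$ in the configuration model; the delicate point is that the number of walks realizing a given dense subgraph can be enormous, so naive bounds overshoot $2\sqrt{d-1}$ by a factor polynomial in $d$ (this is why the elementary trace method yields only $\bigo(d^{3/4})$ or $\bigo(\sqrt{d \log d})$). Friedman's resolution is the method of \emph{selective traces}: one identifies the rare small subgraphs of positive excess (``tangles'') where the spectral mass can concentrate, bounds their occurrence by a first-moment argument, and shows that after excising them the trace is governed by the tree count. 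The parameter $k = k(n)$ must then be tuned — growing like $c\log n$, or a small power of $n$ — large enough that $\lambda^k$ separates from $n\,(2\sqrt{d-1}+\varepsilon)^k$, yet small enough that the excess contributions stay negligible.

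I expect the tangle analysis to be the main obstacle by a wide margin. It demands a careful enumeration of traced subgraphs organized by excess and girth, sharp asymptotics for the number of walks consistent with each, and the selective-trace bookkeeping that prevents rare dense configurations from dominating the expectation; closing the gap between the $\bigo(d^{3/4})$ bound and the sharp constant $2\sqrt{d-1}$ is exactly the difficulty that kept Alon's conjecture open for two decades, and it is why we invoke the statement as a black box rather than reprove it. Since for our applications only the quoted bound is needed, any complete proof suffices — either the trace-method strategy above or the later, more transparent approach of Bordenave via the non-backtracking operator and the Ihara--Bass formula.
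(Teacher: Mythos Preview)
The paper provides no proof of this lemma at all: it is stated with a citation to Friedman's monograph and used as a black box, exactly as you yourself conclude in your final paragraph. So there is nothing in the paper to compare your sketch against.

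Your outline of the trace/selective-trace strategy is a faithful high-level description of Friedman's original approach, and your remark that Bordenave's non-backtracking argument gives an alternative route is also correct. But as a ``proof proposal'' it is not a proof: the selective-trace machinery you allude to (classifying tangles, bounding walk counts on subgraphs of positive excess, tuning $k$) is precisely the 100+ pages of Friedman's paper, and nothing in your sketch would let a reader reconstruct it. Since the paper's own treatment is simply to cite the result, the appropriate ``proof'' here is a one-line reference, which is effectively what your last sentence concedes.
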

	
	The number of edges $|E(S,T)|$ between sets $S$ and $T$ is expected to be close to the expected number of edges between $S$ and $T$ in a random graph of edge density $d/n$, namely, $d|S||T|/n$. A small $\lambda$ (or large spectral gap) implies that this deviation is small. The following useful bound is essentially proved in~\cite{alon1988explicit} (see also~\cite{alon2016probabilistic}):
	\begin{lemma}[Expander Mixing Lemma]\label{lem:AC}
		Let $G=(V,E)$ be a $d$-regular graph with $n$ vertices and set $\lambda = \lambda(G)$. Then for all $S, T \subseteq V$,
		$$
		\left| |E(S, T)| - \frac{d|S||T|}{n} \right| \le \lambda \sqrt{|S||T|} \,.
		$$
	\end{lemma}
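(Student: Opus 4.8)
The plan is to prove the stated bound by a clean spectral argument based on the Rayleigh quotient, using the indicator vectors of $S$ and $T$ decomposed against the eigenbasis of the adjacency matrix $A=A(G)$. First I would recall that, since $A$ is real and symmetric, it admits an orthonormal eigenbasis $u_1,u_2,\ldots,u_n$ with $Au_i=\lambda_i u_i$, and that for a $d$-regular graph the top eigenvalue is $\lambda_1=d$ with normalized eigenvector $u_1=\frac{1}{\sqrt n}\mathbf{1}$, where $\mathbf{1}$ denotes the all-ones vector. The quantity we wish to control is $|E(S,T)|=\mathbf{1}_S^\top A\,\mathbf{1}_T$, where $\mathbf{1}_S$ and $\mathbf{1}_T$ are the $0/1$ indicator vectors of $S$ and $T$; here edges with both endpoints in $S\cap T$ are counted once, which matches the Expander Mixing Lemma convention.

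The key step is to expand the indicators in the eigenbasis. Writing $\mathbf{1}_S=\sum_i \alpha_i u_i$ and $\mathbf{1}_T=\sum_i \beta_i u_i$, the projections onto $u_1$ are
\begin{equation*}
\alpha_1=\langle \mathbf{1}_S,u_1\rangle=\frac{|S|}{\sqrt n},\qquad \beta_1=\langle \mathbf{1}_T,u_1\rangle=\frac{|T|}{\sqrt n}.
\end{equation*}
Then, using orthonormality and $Au_i=\lambda_i u_i$,
\begin{equation*}
\mathbf{1}_S^\top A\,\mathbf{1}_T=\sum_{i=1}^n \lambda_i\,\alpha_i\beta_i=\lambda_1\alpha_1\beta_1+\sum_{i=2}^n \lambda_i\,\alpha_i\beta_i=\frac{d|S||T|}{n}+\sum_{i=2}^n \lambda_i\,\alpha_i\beta_i.
\end{equation*}
This isolates exactly the main term $d|S||T|/n$ and reduces the lemma to bounding the error term $\sum_{i\ge 2}\lambda_i\alpha_i\beta_i$.

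The final step is to control the tail sum. Since $|\lambda_i|\le\lambda$ for every $i\ge 2$ by definition of $\lambda=\max(|\lambda_2|,|\lambda_n|)$, I would apply the triangle inequality and then Cauchy--Schwarz:
\begin{equation*}
\left|\sum_{i=2}^n \lambda_i\,\alpha_i\beta_i\right|\le \lambda\sum_{i=2}^n |\alpha_i||\beta_i|\le \lambda\left(\sum_{i=2}^n \alpha_i^2\right)^{1/2}\left(\sum_{i=2}^n \beta_i^2\right)^{1/2}.
\end{equation*}
By Parseval, $\sum_{i=1}^n\alpha_i^2=\|\mathbf{1}_S\|^2=|S|$, so $\sum_{i\ge 2}\alpha_i^2=|S|-\alpha_1^2\le|S|$, and likewise $\sum_{i\ge 2}\beta_i^2\le|T|$. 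Substituting gives the bound $\lambda\sqrt{|S||T|}$ on the error, which is precisely the claimed inequality. The proof is essentially a two-line computation once the eigenbasis decomposition is set up; the only point requiring care is the bookkeeping that $u_1=\mathbf{1}/\sqrt n$ really is the Perron eigenvector with eigenvalue $d$ (which uses $d$-regularity) so that the main term comes out with the correct constant, and that dropping $\alpha_1^2,\beta_1^2\ge 0$ only improves the Cauchy--Schwarz bound. There is no substantive obstacle here; the result is standard, and the "hard part" is merely ensuring the indicator-vector conventions and the sign of the discarded nonnegative terms are handled cleanly.
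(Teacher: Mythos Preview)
The paper does not supply its own proof of this lemma; it is quoted as a known result with references to \cite{alon1988explicit} and \cite{alon2016probabilistic}. Your argument is the standard spectral proof via the eigenbasis decomposition and Cauchy--Schwarz, and it is correct in substance.

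One bookkeeping slip worth fixing: you assert that in $\mathbf{1}_S^\top A\,\mathbf{1}_T$ edges with both endpoints in $S\cap T$ are ``counted once.'' In fact $\sum_{u\in S}\sum_{v\in T}A_{uv}$ picks up each such edge twice, once as the ordered pair $(u,v)$ and once as $(v,u)$. This is precisely the convention the paper spells out in the note immediately following the lemma (``twice the number of edges that contain only vertices of $S\cap T$''), so your identity $|E(S,T)|=\mathbf{1}_S^\top A\,\mathbf{1}_T$ is correct and the proof goes through; only the parenthetical description of the convention is misstated.
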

	\noindent
	(Note that $S \cap T$ does not have to be empty; in general, $|E(S,T)|$ is defined to be the number of edges between $S \setminus T$ to $T$ plus twice the number of edges that contain only vertices of $S \cap T$.)
	
    At some point it will be better to apply stronger estimates for $|E(S,V \setminus S)|$ that can be easily derived from a slightly stronger version of the above lemma for $|E(S,S)|$ (see~\cite{alon1988explicit}), namely,
	$$
		\left| |E(S, S)| - \frac{d|S|^2}{n} \right| \le \frac {\lambda |S||V \setminus S|}{n}   
    $$
	for all $S \subseteq V$. Since $|E(S,V \setminus S)| = d|S| - |E(S,S)|$, we immediately get that
    \begin{equation} \label{eqn:bisection}
		\left| |E(S,V \setminus S)| - \frac{d|S||V \setminus S|}{n} \right| \le \frac{\lambda|S||V \setminus S|}{n}
	\end{equation}
	for all $S \subseteq V$. 
	
	\subsection{Concentration Tools}\label{sec:concentration}
	
	Let us first state a few specific instances of Chernoff's bound that we will find useful. Let $X \in \textrm{Bin}(n,p)$ be a random variable distributed according to a Binomial distribution with parameters $n$ and $p$. Then, a consequence of \emph{Chernoff's bound} (see e.g.~\cite[Theorem~2.1]{JLR}) is that for any $t \ge 0$ we have
	\begin{eqnarray}
		\Prob( X \ge \E X + t ) &\le& \exp \left( - \frac {t^2}{2 (\E X + t/3)} \right)  \label{chern1} \\
		\Prob( X \le \E X - t ) &\le& \exp \left( - \frac {t^2}{2 \E X} \right).\label{chern}
	\end{eqnarray}
	
	Moreover, let us mention that the bound holds in a more general setting as well, that is, for $X=\sum_{i=1}^n X_i$ where $(X_i)_{1\le i\le n}$ are independent variables and for every $i \in [n]$ we have $X_i \in \textrm{Bernoulli}(p_i)$ with (possibly) different $p_i$-s (again, see~e.g.~\cite{JLR} for more details). Finally, it is well-known that the Chernoff bound also applies to negatively correlated Bernoulli random variables~\cite{dubhashi1998balls}.
	
	\subsection{The Differential Equation Method}\label{sec:DEmethod}
	
	In this paper, we will use the differential equation method (see~\cite{BD20} for a gentle introduction) to establish dynamic concentration of our random variables. The origin of the differential equation method stems from work done at least as early as 1970 (see Kurtz~\cite{Kurtz1970}), and which was developed into a very general tool by Wormald~\cite{W1995,W1999} in the 1990's. Indeed, Wormald proved a ``black box'' theorem, which gives dynamic concentration so long as some relatively simple conditions hold. Warnke~\cite{Warnke2020} recently gave a short proof of a somewhat stronger black box theorem.
	
	In this section, we provide a self-contained \textit{non-asymptotic} statement of the differential equation method which we will use for each property we investigate. The statement combines~\cite[Theorem $2$]{Warnke2020}, and its extension~\cite[Lemma $9$]{Warnke2020}, in a form convenient for our purposes, where we modify the notation of~\cite{Warnke2020} slightly. In particular, we rewrite~\cite[Lemma $9$]{Warnke2020} in a less general form in terms of a stopping time $T$. We need only check the `Boundedness Hypothesis' (see below) for $0 \le t \le T$, which is exactly the setting in our proofs.
	
	Suppose we are given integers $a,n \ge 1$, a bounded domain $\scr{D} \subseteq \mb{R}^{a+1}$, and functions $(F_k)_{1 \le k \le a}$ where each $F_k: \scr{D} \to \mb{R}$ is $L$-Lipschitz-continuous on $\scr{D}$ for $L \ge 0$. Moreover, suppose that $R \in [1, \infty)$ and $S \in (0, \infty)$ are \textit{any} constants which satisfy $\max_{1 \le k \le a} |F_{k}(x)| \le R$ for all $x=(s,y_1,\ldots ,y_{a})\in \scr{D}$ and $0 \le s \le S$.
	
	\begin{theorem}[Differential Equation Method, \cite{Warnke2020}] \label{thm:differential_equation_method}
		Suppose we are given $\sigma$-fields $\scr{F}_{0}  \subseteq \scr{F}_{1} \subseteq \cdots$, and for each $t \ge 0$, random variables $((Y_{k}(t))_{1 \le k \le a}$ which are $\scr{F}_t$-measurable. Define $T_{\scr{D}}$ to be the minimum $t \ge 0$ such that
		\[
		(t/n, Y_{1}(t)/n, \ldots , Y_{a}(t)/n) \notin \scr{D}.
		\]
		Let $T \ge 0$ be an (arbitrary) stopping time\footnote{The stopping time $T\ge 0$ is \textbf{adapted} to $(\scr{F}_t)_{t \ge 0}$, provided the event $\{\tau = t\}$ is $\scr{F}_t$-measurable for each $t \ge 0$.} adapted to $(\scr{F}_t)_{t \ge 0}$, and assume that the following conditions hold for $\delta, \beta, \gamma \ge 0$ and $\lambda \ge \delta \min\{S, L^{-1}\} + R/n$:
		\begin{enumerate}
			\item[(i)] The `Initial Condition': For some $(0,\hat{y}_1,\ldots ,\hat{y}_a) \in \scr{D}$, \label{enum:initial_conditions}
			\[
			\max_{1 \le k \le a} |Y_{k}(0) - \hat{y}_k n| \le \lambda n.
			\] 
			\item[(ii)] The `Trend Hypothesis': For each  $t \le \min\{ T, T_{\scr{D}} -1\}$, \label{enum:trend_hypothesis}
			$$|\mb{E}[ Y_{k}(t+1) - Y_{k}(t) \mid \scr{F}_t] - F_{k}(t/n,Y_{1}(t)/n,\ldots ,Y_{a}(t)/n)| \le \delta.$$
			\item[(iii)] The `Boundedness Hypothesis': With probability $1 - \gamma$, \label{enum:boundedness_hypothesis}
			$$|Y_{k}(t+1) -  Y_{k}(t)| \le \beta,$$
			for each $t \le \min\{ T, T_{\scr{D}} -1\}$.
		\end{enumerate}
		Then, with probability at least $1 - 2a \exp\left(\frac{-n \lambda^2}{8 S \beta^2}\right) - \gamma$, we have that
		\begin{equation}
			\max_{0 \le t \le \min\{T, \sigma n\}} \max_{1 \le k \le a} |Y_{k}(t) -y_{k}(t/n) n| < 3 \lambda \exp(L S)n,
		\end{equation}
		where $(y_{k}(s))_{1 \le k \le a}$ is the unique solution to the system of differential equations
		\begin{equation} \label{eqn:general_de_system}
			y_{k}'(s) = F_{k}(s, y_{1}(s),\ldots ,y_{a}(s)) \quad \mbox{with $y_{k}(0) = \hat{y}_k$ for $1 \le k \le a$,}
		\end{equation}
		and $\sigma = \sigma(\hat{y}_1,\ldots ,\hat{y}_a) \in [0,S]$ is any choice of $\sigma \ge 0$ with the property that $(s,y_{1}(s),\ldots, y_{a}(s))$ has $\ell^{\infty}$-distance at least $3 \lambda \exp(LS)$ from the boundary of $\scr{D}$ for all $s \in [0, \sigma)$.
		\begin{remark}
			Standard results for differential equations guarantee that \eqref{eqn:general_de_system} has a unique solution $(y_{k}(s))_{1\le k \le a}$ which extends arbitrarily close to the boundary of $\scr{D}$. 
		\end{remark}
	\end{theorem}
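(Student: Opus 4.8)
The statement is Warnke's black-box form of the differential equation method, obtained by combining \cite[Theorem~2]{Warnke2020} with the stopping-time localization of \cite[Lemma~9]{Warnke2020}; I sketch how the argument runs. The plan is to control, uniformly in $t$, the scaled deviation $w_k(t) := Y_k(t) - n\,y_k(t/n)$ between each random variable and its deterministic trajectory, by exhibiting a supermartingale for each coordinate and each sign and applying a Freedman/Azuma-type tail bound. Two deterministic facts about \eqref{eqn:general_de_system} are needed first. Since each $F_k$ is bounded by $R$ and $L$-Lipschitz on the slab $\{0\le s\le S\}\cap\scr{D}$, standard existence-uniqueness theory guarantees a unique solution $(y_k)$, which is $R$-Lipschitz; and the fundamental theorem of calculus together with the Lipschitz bound gives the one-step consistency estimate
\[
\big|\, n\big(y_k((t+1)/n)-y_k(t/n)\big) - F_k\big(t/n,\mathbf{y}(t/n)\big)\,\big| = \bigo\!\left(\tfrac{LR}{n}\right),
\]
where $\mathbf{y}(s)=(y_1(s),\ldots,y_a(s))$. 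This measures the cost of replacing the flow by its Euler increment.

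Next I would compute the conditional drift of the deviation. Writing $\mathbf{Y}(t)/n$ for $(Y_1(t)/n,\ldots,Y_a(t)/n)$, for every $t\le\min\{T,T_{\scr{D}}-1\}$ we have
\begin{align*}
\E[\,w_k(t+1)-w_k(t)\mid\scr{F}_t\,]
&= \big(\E[\,Y_k(t+1)-Y_k(t)\mid\scr{F}_t\,] - F_k(t/n,\mathbf{Y}(t)/n)\big) \\
&\quad + \big(F_k(t/n,\mathbf{Y}(t)/n) - F_k(t/n,\mathbf{y}(t/n))\big) \\
&\quad - \big(n(y_k((t+1)/n)-y_k(t/n)) - F_k(t/n,\mathbf{y}(t/n))\big).
\end{align*}
The Trend Hypothesis bounds the first bracket by $\delta$, the $L$-Lipschitz property bounds the second by $(L/n)\max_j|w_j(t)|$, and the consistency estimate bounds the third by $\bigo(LR/n)$. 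The decisive feature is the middle term: the drift of the deviation is proportional to the deviation itself, so a term-by-term summation over the $\bigo(Sn)$ steps amplifies multiplicatively by $(1+L/n)$ per step, and it is precisely this $(1+L/n)^{Sn}\approx e^{LS}$ growth that produces the factor $\exp(LS)$ in the conclusion.

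To absorb the self-bounding drift I would fix the target envelope $g(t):=3\lambda\exp(Lt/n)\,n$ and the stopping time $\tau:=\min\{\,t\le\min\{T,T_{\scr{D}}-1\}:\max_k|w_k(t)|\ge g(t)\,\}$. For $t<\tau$ one has $\max_j|w_j(t)|<g(t)$, so the Lipschitz term is at most $(L/n)g(t)$; since $g(t+1)-g(t)\approx (L/n)g(t)$, the exponential weighting in $g$ cancels exactly this multiplicative growth. Consequently, for each fixed $k$ and each sign, the stopped and drift-corrected process
\[
M_k^{\pm}(t) := \pm\,w_k(t\wedge\tau) - \sum_{s<t\wedge\tau}\Big(\tfrac{L}{n}g(s)+\delta+\bigo\!\big(\tfrac{LR}{n}\big)\Big)
\]
is a supermartingale. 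On the probability-$(1-\gamma)$ event supplied by the Boundedness Hypothesis its increments are at most $\beta+\bigo(R/n)$, and there are at most $Sn$ of them, so a Freedman/Azuma inequality gives $\Prob(\sup_t (M_k^{\pm}(t)-M_k^{\pm}(0))\ge\lambda n)\le\exp(-n\lambda^2/(8S\beta^2))$. If the threshold in $\tau$ is ever attained through coordinate $k$ with a given sign, then—using the Initial Condition $\max_k|w_k(0)|\le\lambda n$ and the hypothesis $\lambda\ge\delta\min\{S,L^{-1}\}+R/n$, which keeps the accumulated deterministic drift below $\lambda n$—the corresponding $M_k^{\pm}$ must have risen by at least $\lambda n$ (the factor $3$ in $g$ providing room for the initial slack, the drift, and the martingale fluctuation). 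A union bound over the $2a$ choices of $(k,\pm)$, plus the failure probability $\gamma$ of the Boundedness Hypothesis, yields the stated bound $1-2a\exp(-n\lambda^2/(8S\beta^2))-\gamma$, and on the complementary event one has $\max_k|w_k(t)|<g(t)\le 3\lambda e^{LS}n$ throughout.

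It remains to localize to the stopping times and the domain. The role of $T_{\scr{D}}$ is that the hypotheses and the Lipschitz constant need only be invoked while $(t/n,\mathbf{Y}(t)/n)\in\scr{D}$, while the point of the extension \cite[Lemma~9]{Warnke2020} is that the Trend and Boundedness Hypotheses are required only for $t\le\min\{T,T_{\scr{D}}-1\}$; since the supermartingale is stopped at $\tau\le\min\{T,T_{\scr{D}}-1\}$, nothing beyond that time enters. Finally the choice of $\sigma$—that $(s,\mathbf{y}(s))$ stays at $\ell^{\infty}$-distance at least $3\lambda e^{LS}$ from $\partial\scr{D}$ for $s<\sigma$—ensures that on the high-probability event $\max_k|w_k(t)|<3\lambda e^{LS}n$ the scaled process cannot have exited $\scr{D}$ before time $\sigma n$, so $T_{\scr{D}}\ge\sigma n$ there and the estimate holds for all $0\le t\le\min\{T,\sigma n\}$. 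I expect the main obstacle to be the third step: choosing the weighting in $g$ and the drift correction so that the self-bounding Lipschitz term is absorbed cleanly while the increments stay bounded by $\beta$ for the concentration inequality. Making the constants match is exactly what forces the hypothesis $\lambda\ge\delta\min\{S,L^{-1}\}+R/n$ and the output factor $3\lambda\exp(LS)$.
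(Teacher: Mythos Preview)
The paper does not contain a proof of this theorem: it is stated as a black-box result from \cite{Warnke2020}, combining Theorem~2 and Lemma~9 there, and is only used as a tool (indeed, it is not even invoked rigorously in Section~\ref{sec:DEattempt}, which defers to \cite{wormald2003analysis}). So there is nothing in the paper to compare your attempt against.

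That said, your sketch is a faithful outline of Warnke's argument: introduce the deviation $w_k(t)=Y_k(t)-n\,y_k(t/n)$, split its conditional drift into the Trend error, the Lipschitz self-bounding term, and the Euler discretization error, absorb the multiplicative $(1+L/n)$ growth by an exponential envelope, build a stopped supermartingale for each coordinate and sign, and conclude via an Azuma/Hoeffding-type bound with a union over $2a$ events. The role of the hypothesis $\lambda\ge\delta\min\{S,L^{-1}\}+R/n$ and of the factor $3\lambda e^{LS}$ is correctly identified, as is the use of $\sigma$ to guarantee $T_{\scr{D}}\ge\sigma n$ on the good event. For the purposes of this paper, however, you need only cite the result; a full proof is not required.
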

	
	\section{2-regular Graphs}\label{sec:2reg}
	
	Let $Y = Y(n)$ be the total number of cycles in a random 2-regular graph on $n$ vertices. Since exactly three vertices need to be initially blue in each cycle (that is, $H(C_i) = 3$ for any $i \ge 3$), $H(\mathcal{G}_{n,2}) = 3 Y(n)$.
	
	We know that the random 2-regular graph is a.a.s.\ disconnected; by simple calculations one can show that the probability of having a Hamiltonian cycle is asymptotic to $\frac 12 e^{3/4} \sqrt{\pi/n} = o(1)$ (see, for example,~\cite{wormald1999models}).  We also know that the total number of cycles $Y(n)$ is sharply concentrated around $(1/2) \log n$. Indeed, it is not difficult to see this by generating the random graph sequentially using the pairing model. The probability of forming a cycle in step $i$ is exactly $1/(2n-2i+1)$, so the expected number of cycles is 
	$$
	\sum_{i=1}^n \frac {1}{2n-2i+1}= \sum_{i=1}^{2n} \frac {1}{i} - \frac {1}{2} \sum_{i=1}^{n} \frac {1}{i} = \log(2n) - \frac {1}{2} \log n + O(1) =  \frac {1}{2} \log n + O(1). 
	$$
	The variance can be calculated in a similar way. So we get the following result (Theorem~\ref{thm:2reg}): 
	\begin{center}
	A.a.s.\ $H(\mathcal{G}_{n,2}) \sim (3/2) \log n$.
	\end{center}

	\section{Upper Bounds from the Contiguous Model: $d$-regular Graphs, $d \ge 3$}\label{sec:ge3reg}
	
	To provide an upper bound for $H(\mathcal{G}_{n,d})$ for 3-regular graphs, and subsequently for $d$-regular graphs with $d \ge 4$, we will use the contiguous model introduced in Subsection~\ref{sec:contiguous}. For a given $d \ge 3$, a $d$-regular graph (generated by the pairing model) can be viewed as the union of a Hamilton cycle $(v_1, v_2, \ldots, v_n)$ and random $(d-2)$-regular graph on the same vertex set, namely, $\{v_1, v_2, \ldots, v_n\}$. We will call the two neighbours of $v_k$ that are on the Hamilton cycle HC-neighbours; the remaining $d-2$ neighbours of $v_k$ will be called RG-neighbours. 
	
	As explained at the end of Subsection~\ref{sec:definitions}, it will be easier to update $B_1$, the set of initial blue vertices in an online fashion. The strategy will be the same for all $d \ge 3$ but formulas for $d=3$, the degenerate case, will be slightly different. Therefore, once we explain the general strategy we will independently deal with the $d=3$ case (Subsection~\ref{sec:contiguous_3}) before moving to the $d \ge 4$ case (Subsection~\ref{sec:contiguous_4}). 

\medskip
 
	Start with $B_1 = \emptyset$. We attempt to colour vertices blue as we hop along the Hamilton cycle. We start by turning $v_1$ and all of its neighbours blue; these vertices are added to $B_1$. We will try to make $v_1$ to hop to some neighbour of $v_2$. (Of course, $v_1$ can try to hop through some other neighbour, not necessarily through $v_2$, but insisting on this choice will make the analysis of the strategy tractable. But this strategy is certainly suboptimal.) If $v_3$ is white, then $v_1$ can hop there and force $v_3$ to become blue (note that $v_3$ could be a neighbour of $v_1$ and so could be blue). Similarly, if any of the $(d-2)$ RG-neighbours of $v_2$ are white, then $v_1$ can hop and force one of them to become blue. Note that if $v_1$ could not hop through $v_2$, then all neighbours of $v_2$ are already blue. Otherwise, we turn the remaining white neighbours of $v_2$ (if there are any) blue; these vertices are added to $B_1$. After that we will try to make $v_2$ to hop through $v_3$ to some neighbour of $v_3$, and continue hoping along the Hamilton cycle. Once we investigate $v_{n-3}$, the strategy is finished and we can check how many vertices were added to $B_1$ during this process. (Note that $v_n$ is a HC-neighbour of $v_1$ and so when we reach $v_{n-2}$, all vertices are certainly blue.) By design, the set $B_1$ that is constructed during this process is feasible and so its size yields the desired upper bound for $H(\mathcal{G}_{n,d})$. 
	
	Before we describe the situation when $v_t$ tries to hop through $v_{t+1}$, let us make a simple observation that will simplify the analysis of the above process. Let $X$ be the random variable counting how many vertices could not hop during the process; clearly, $Y = n-X$ is the number of vertices that hopped. Since each time a vertex hops exactly one white vertex turns blue, $|B_1|+Y=n$ which is equivalent to 
	$$
	|B_1| = n - Y = X.
	$$
Hence, if we prove that a.a.s.\ $X \sim f(n)$ for some deterministic function $f(n)$, then we may conclude that a.a.s.\ $H(\mathcal{G}_{n,d}) \le (1+o(1)) f(n)$. 

\medskip

	For each $t \in [n-3]$, let $X_t$ be the indicator random variable for the event that $v_t$ cannot hop through $v_{t+1}$. Clearly, $X = \sum_{t=1}^{n-3} X_t$. When $v_t$ tries to hop though $v_{t+1}$, vertices $v_1, v_2, \ldots, v_t$ and all of their neighbours are blue (in particular, $v_{t+1}$ is blue); the remaining vertices are white. Vertex $v_t$ cannot hop (that is, $X_t = 1$) if and only if the following two properties hold:
	\begin{itemize}
		\item [(P1)] at least one RG-neighbour of $v_{t+2}$ is in $\{ v_1, v_2, \ldots, v_t \}$ (that is, $v_{t+2}$ is blue so $v_t$ cannot hop there),
		\item [(P2)] for some $i \in \{ 0, 1, \ldots, d-2 \}$,
			\begin{itemize}
				\item [(P2')] $i$ RG-neighbours of $v_{t+1}$ are in $\{ v_n, v_1, v_2, \ldots, v_t, v_{t+2} \}$, and
				\item [(P2'')] $(d-2-i)$ RG-neighbours of $v_{t+1}$ are in $\{ v_{t+3}, v_{t+4}, \ldots, v_{n-1} \}$ but all of these $(d-2-i)$ RG-neighbours have at least one RG-neighbour in $\{ v_1, v_2, \ldots, v_t \}$
			\end{itemize}
			(that is, all of the $(d-2)$ RG-neighbours of $v_{t+1}$ are blue so $v_t$ cannot hop to any of these vertices).
	\end{itemize}

	\subsection{$d=3$ case}\label{sec:contiguous_3}

	The case $d=3$ is the degenerate case (slightly different and easier to analyze), and we will deal with it independently. We will prove the following (Theorem~\ref{thm:contiguous_3}):
\begin{center}
 A.a.s.\ $H(\mathcal{G}_{n,3}) \le (1+o(1)) \, n/3$.
 \end{center}
	\begin{proof}[Proof of Theorem~\ref{thm:contiguous_3}]
		Fix any $t \in [n-3]$. To compute the probability that $X_t=1$, we first expose the unique RG-neighbour of $v_{t+2}$. Property~(P1) holds with probability $t / (n-1)$. Conditioning on this event, we expose the RG-neighbour of $v_{t+1}$ to determine whether property~(P2) holds or not. An important observation is that if the unique RG-neighbour of $v_{t+1}$ is in $\{ v_{t+3}, v_{t+4}, \ldots, v_{n-1} \}$, then the property~(P2'') cannot hold---this RG-neighbour has only one RG-neighbour, namely, $v_{t+1}$ which is not in $\{ v_1, v_2, \ldots, v_t \}$. In other words, the only chance that property~(P2) holds is when $i = 1$. This makes the case $d=3$ degenerate and distinguishes it from the case $d \ge 4$. The conditional probability that~(P2) holds is then equal to $t/(n-3)$. We get that 
		$$
		\Prob(X_t = 1) = \frac {t}{n-1} \cdot \frac {t}{n-3}
		$$
		and so
		\begin{eqnarray}\label{expectation}
		\E [X] & = & \sum_{t=1}^{n-3} \Prob(X_t = 1)
		~=~ \sum_{t=1}^{n-3} \frac{t^2}{(n-1)(n-3)} \nonumber\\
		&=& \frac{1}{(n-1)(n-3)} \cdot \frac{(n-3)(n-4)(2n-5)}{6} 
		~=~ \frac{n}{3} + \bigo(1).
		\end{eqnarray}
	
		It remains to show that $X$ is well-concentrated around its expectation. We demonstrate this by estimating the variance. First, note that
	\begin{eqnarray*}
		\Var[X] & = & \sum_{\substack{1\leq k < \ell \leq n-3}} \big( \Prob(X_k = X_\ell = 1) - \Prob(X_k=1)\Prob(X_\ell = 1) \big) \\
		&& + \sum_{k = 1}^{n-3} \big( \Prob(X_k=1) - \Prob(X_k = 1)^2 \big). 
  	\end{eqnarray*}
By~(\ref{expectation}), the second term is at most $\E [X] = \bigo(n)$. Moreover, the first term can be split further depending on whether $\ell = k+1$ or $\ell \geq k+2$. If $\ell = k+1$, then each term is trivially at most one, thus the corresponding sum is again $\bigo(n)$. It follows that 
   \begin{eqnarray*}
\Var[X] &=& \bigo(n) + \sum_{\substack{1\leq k < \ell \leq n-3 \\ \ell \geq k+2}} \big( \Prob(X_k = X_\ell = 1) - \Prob(X_k=1)\Prob(X_\ell = 1) \big) \\
&=& \bigo(n) + \sum_{\substack{1\leq k < \ell \leq n-3 \\ \ell \geq k+2}} \left( \frac{k^2}{(n-1)(n-3)} \cdot \frac { (\ell+\bigo(1))^2}{(n-5)(n-7)} - \frac{k^2}{(n-1)(n-3)} \cdot \frac{\ell^2}{(n-1)(n-3)} \right) \\
&=& \bigo(n) + \sum_{k=1}^{n-5} \bigo\Big(\frac{k^2}{n^2}\Big) \sum_{\ell=k+2}^{n-3} \bigo\Big(\frac{\ell}{n^2}\Big) \\
&=& \bigo(n) + \sum_{k=1}^{n-5} \bigo\Big(\frac{k^2}{n^2}\Big) 
~=~ \bigo(n) ~=~ o(n^2).
	\end{eqnarray*}
 Since $\Var[X]  =o((\E [X])^2) $, $X$ is well-concentrated around its expectation by the second moment method. The proof of the theorem is finished.
	\end{proof}

	\subsection{$d \geq4$ case}\label{sec:contiguous_4}

 The bound from Theorem~\ref{thm:contiguous_4andabove} is replicated here for easier reference:
 for any integer $d \ge 4$, a.a.s.
 \begin{eqnarray}
     	\ H(\mathcal{G}_{n,d}) \le (1+o(1)) \,\frac{(d-1)!(d-2)^{d-1}}{\prod_{i=1}^{d-1}(i(d-2)+1)} \, n.
 \end{eqnarray}
  The exact values for $3\leq d \leq 10$ are presented in Table~\ref{tab:upper_bound}. Approximated values can be found in  Table~\ref{tab:UpperAndLowerBoundsHoppingNumber}.
		\begin{table}[htp]
		\caption{Explicit upper bounds for the hopping number for small values of $d$.}
		\begin{center}
			\begin{tabular}{|c|c|c|c|}
				\hline
				Degree & Upper Bound & Degree & Upper Bound \\
				\hline
				$d=3$ & 1/3 & $d=7$ & 78125/124124 \\
				$d=4$ & 16/35 & $d=8$ & 40310784/60911435 \\
				$d=5$ & 243/455 & $d=9$ & 40353607/58640175 \\
				$d=6$ & 8192/13923 & $d=10$ & 17179869184/24192643475 \\
				\hline
			\end{tabular}
		\end{center}
		\label{tab:upper_bound}
	\end{table}
			 
    		The case of  $d \geq 4$ follows a similar structure to the case where $d=3$. The main distinction here is that, since there are always at least two RG-neighbours in addition to the two HC-neighbours on the cycle for $d \geq 4$, a vertex could be a RG-neighbour of more than one vertex in $\{v_1, v_2, \ldots, v_t\}$ (see Property~(P1)). Similarly, $v_{t+1}$ could have $(d-2-i)$ RG-neighbours in $\{ v_{t+3}, v_{t+4}, \ldots, v_{n-1} \}$ \emph{and} all of these RG-neighbours can have at least one RG-neighbour in $\{ v_1, v_2, \ldots, v_t \}$ for all values of $i\in\{ 0, 1, \ldots, d-2 \}$. In other words, the Property~(P2) holds with a non-zero probability even when $i=0$. 
	
	\begin{proof}[Proof of Theorem~\ref{thm:contiguous_4andabove}]
  Fix any $t \in [n-3]$. To compute the probability that $X_t=1$, we first expose the $(d-2)$ RG-neighbours of $v_{t+2}$, one by one. Property~(P1) holds with probability 
  \begin{equation}\label{eq:p1}
  1- \prod_{i=1}^{d-2} \left(1- \frac {(d-2)t}{(d-2)n-2i+1} \right) + \bigo(n^{-1}) = 1 - \left(1- \frac {t}{n} \right)^{d-2} + \bigo(n^{-1}). 
  \end{equation}
  (The first $\bigo(n^{-1})$ term corresponds to the event that there is a loop at $v_{t+2}$.) Conditioning on Property~(P1), we expose the RG-neighbours of $v_{t+1}$ to determine whether property~(P2) holds or not. The conditional probability that~(P2) holds is equal to
  \begin{align}
   & \bigo(n^{-1}) + \sum_{i=0}^{d-2} {{d-2}\choose {i}} \left( \frac {(d-2) t+\bigo(1)}{(d-2)n+\bigo(1)} \right)^i \left( 1- \frac {(d-2)t+\bigo(1)}{(d-2)n+\bigo(1)} \right)^{d-2-i} \nonumber \\
   & \qquad \qquad \qquad \qquad \cdot \left( 1- \left( 1- \frac {(d-2)t+\bigo(1)}{(d-2)n+\bigo(1)} \right)^{d-3} \right)^{d-2-i} \nonumber \\ 
   & = \bigo(n^{-1}) +  \sum_{i=0}^{d-2} {{d-2}\choose {i}} \left( \frac {t}{n} \right)^i \left( 1- \frac {t}{n} \right)^{d-2-i} \left( 1- \left( 1- \frac {t}{n} \right)^{d-3} \right)^{d-2-i}. \label{eq:p2}
  \end{align}
  We get that 
		\begin{align*}
		\Prob(X_t = 1) & = \bigo(n^{-1}) + \Bigg(1-\Big(1-\frac {t}{n}\Big)^{d-2}\Bigg) \\
  & \qquad \qquad \qquad \cdot \sum_{i=0}^{d-2} {{d-2}\choose {i}} \Big( \frac{t}{n} \Big)^i \Big(1-\frac{t}{n} \Big)^{d-2-i}\Bigg(1-\Big(1-\frac{t}{n}\Big)^{d-3}\Bigg)^{d-2-i} \\
  & = \bigo(n^{-1}) + \Bigg(1-\Big(1-\frac {t}{n}\Big)^{d-2}\Bigg) \cdot \Bigg(\frac{t}{n} + \Big(1-\frac{t}{n}\Big)-\Big(1-\frac{t}{n}\Big)^{d-2}\Bigg)^{d-2} \\
    & = \bigo(n^{-1}) + \Bigg(1-\Big(1-\frac {t}{n}\Big)^{d-2}\Bigg)^{d-1}
		\end{align*}
		and so
		\begin{eqnarray}\label{expectationd4}
		\E [X] & = & \sum_{t=1}^{n-3} \Prob(X_t = 1) 
		~=~ \bigo(1)+ \sum_{t=1}^{n-3} \Bigg(1-\Big(1-\frac {t}{n}\Big)^{d-2}\Bigg)^{d-1} \nonumber\\
		&=& \bigo(1)+ n\int_0^1 (1-(1-x)^{d-2})^{d-1} \,dx \nonumber\\
		&=&  \bigo(1) + n\frac{(d-1)!(d-2)^{d-1}}{\prod_{i=1}^{d-1}(i(d-2)+1)},
		\end{eqnarray}
	where the last integral follows via a recursive reduction approach. 
 	As with the case where $d=3$, we would like to show that $X$ is well-concentrated around its expectation by estimating the variance. First, note that
	\begin{eqnarray*}
		\Var[X] & = & \sum_{\substack{1\leq k < \ell \leq n-3}} \big( \Prob(X_k = X_\ell = 1) - \Prob(X_k=1)\Prob(X_\ell = 1) \big) \\
		&& + \sum_{k = 1}^{n-3} \big( \Prob(X_k=1) - \Prob(X_k = 1)^2 \big). 
  	\end{eqnarray*}
By~(\ref{expectationd4}), the second term is at most $\E[X] = \bigo(n)$. As in the proof of Theorem~\ref{thm:contiguous_3}, the first term can be split further depending on whether $\ell = k+1$ or $\ell \geq k+2$. If $\ell = k+1$, then each term is trivially at most one, thus the sum is again $\bigo(n)$. To estimate $\Prob(X_k = X_\ell = 1)$ we first expose the $(d-2)$ RG-neighbours of $v_{k+2}$ and compute the probability of Property~(P1) holding for $X_k$ as in~(\ref{eq:p1}). Conditioning on that, Property~(P1) holds for $X_\ell$ with probability
$$
  1- \prod_{i=1}^{d-2} \left(1- \frac {(d-2)\ell + \bigo(1)}{(d-2)n-2i-2(d-2)+1} \right) + \bigo(n^{-1}) = 1 - \left(1- \frac {\ell}{n} \right)^{d-2} + \bigo(n^{-1}). 
$$
Conditioning on that, the probability that Property~(P2) holds for both $X_k$ and $X_\ell$ can be computed the same way as in~(\ref{eq:p2}). Clearly, the fact that $X_k=1$ affects the probability that $X_\ell=1$ but the difference is hidden in the $\bigo(1)$ terms. We get that
$$
 \Prob(X_k = X_\ell = 1) = \bigo(n^{-1}) + \Bigg(1-\Big(1-\frac {k}{n}\Big)^{d-2}\Bigg)^{d-1} \cdot \Bigg(1-\Big(1-\frac {\ell}{n}\Big)^{d-2}\Bigg)^{d-1}. 
$$
It follows that $\Prob(X_k = X_\ell = 1) - \Prob(X_k=1)\Prob(X_\ell = 1) = \bigo(n^{-1})$ and so $\Var[X] = \bigo(n)$. 
  Since $\Var[X]  =o((\E [X])^2) $, $X$ is well-concentrated around $\E[X]$ by the second moment method. The proof of the theorem is finished. 
\end{proof}
  
	\section{Lower Bounds From the Expansion Properties: $d$-regular Graphs, $d \ge 3$}\label{sec:lower_bounds}
	
In this section, we provide rudimentary lower bounds by invoking the Expander Mixing Lemma. However, let us first present an observation that will be used not only here but also for stronger numerical bounds presented in the next section. 

\begin{lemma}\label{lem:big_sets}
Suppose that $H(G) \le k$ for some graph $G=(V,E)$. Then, $V$ can be partitioned into sets $S, T$, and $U$ such that $|S|=|T|=(n-k)/2$, $|U|=k$, and there is no edge between $S$ and~$T$. 
\end{lemma}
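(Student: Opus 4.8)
The plan is to exploit the structure of a successful hopping forcing process run from an optimal initial set $B_1$ with $|B_1| = H(G) = k$. The key observation is that at the very moment a vertex $x_t$ performs its hop, every vertex in its first neighbourhood is already blue. So I would look at the state of the process at a well-chosen time and split the vertices into those that are blue at that point versus those still white, and track which vertices have already hopped.

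First I would set up notation: let $\ell = n - k$ be the number of hops, and consider the sequence of hops $x_1 \to y_1, \ldots, x_\ell \to y_\ell$. Consider the "halfway" time $s = \lceil \ell/2 \rceil$, i.e., look at the process just before hop $s+1$ is performed (or, more carefully, pick $s$ so that exactly $(n-k)/2$ vertices have turned blue via hops so far). At that moment let $U' = B_1$, let the set of vertices coloured blue by the first $s$ hops be one piece, and the set of vertices still white be another. The point is that each $x_t$ for $t \le s$ had all its neighbours blue at the time of its hop, so $x_t$ has no neighbour among the vertices that are white at time $s$ (those white vertices were also white at time $t \le s$). Thus I would try to take $S$ to be the set of vertices that were white at time $s$ (size $n - k - s$) together with... hmm, this needs balancing.

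Let me restate the intended split. Run the process and stop right after exactly $m := (n-k)/2$ hops have occurred; call $T$ the set of the $m$ vertices $y_1, \ldots, y_m$ that got coloured by these hops, $S$ the set of $m$ vertices still white, and $U$ everything else, which is $B_1$ together with $y_{m+1}, \ldots, y_\ell$ minus... no: $|B_1| + \ell = n$ and $|B_1| = k$, $\ell = n-k$, so after $m$ hops the white set has size $n - k - m = m$, good, and $|U| = n - 2m = k$, good. Now I claim there is no edge between $S$ and $T$: take $y_j \in T$ with $j \le m$. When hop $j$ was performed, $y_j$ was in the \emph{second} neighbourhood of $x_j$, so $y_j$ was not adjacent to $x_j$ at that step — but that doesn't immediately forbid edges to $S$. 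The right argument uses $x_j$, not $y_j$: each $x_j$ ($j \le m$) had all neighbours blue when it hopped, hence is adjacent to no vertex of $S$; but the $x_j$'s need not cover $T$. So the clean claim must instead be: every vertex $y_j$ with $j \le m$, at the time $t = j$ it is created, it is adjacent to $x_j$'s neighbour-free... I think the actual statement the authors want is about the vertices that \emph{hop}, so $S$ should be the set of vertices that never hop (the "extinct-never" set) — precisely $X$ from Section~\ref{sec:ge3reg} has $|X| = |B_1|$, and more generally one shows the set of never-hopping vertices in any successful run, together with $B_1$, is independent... This is the main obstacle: pinning down exactly which three sets to take and verifying the no-edge property from the colour-change rule. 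Once the right sets are identified, the no-edge property is immediate from the definition of the hopping rule (a hopping vertex has a fully blue neighbourhood, so it cannot be adjacent to any vertex that is white at that moment, and a vertex white at the end of the first half is white throughout the first half), and the cardinality bookkeeping is the routine $|B_1| + (\text{number of hops}) = n$ computation already used above. I would therefore write the proof as: (1) fix an optimal run; (2) define the stopping point at $(n-k)/2$ hops; (3) define $S$, $T$, $U$; (4) check cardinalities; (5) invoke the hopping rule to rule out $S$–$T$ edges.
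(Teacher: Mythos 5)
Your approach is the paper's approach: stop the process after exactly $(n-k)/2$ hops and use the fact that a vertex which hops has an entirely blue neighbourhood at that moment, while the white set only shrinks over time. All of the ingredients are present in your write-up, including the cardinality bookkeeping $|B_1|+(\text{number of hops})=n$. However, there is one concrete gap, and it is precisely the one you flag yourself: you never commit to the correct choice of the two sets, and your first explicit candidate is wrong. Taking $T=\{y_1,\dots,y_m\}$ (the vertices \emph{coloured} by the first $m$ hops) does not work, as you correctly observe -- a forced vertex $y_j$ has no reason to avoid white neighbours. Your subsequent guess, ``$S$ should be the set of vertices that never hop,'' is also off target: over the whole run exactly $k$ vertices never hop, so that set has the size of $U$, not of $S$ or $T$.

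The resolution is the sentence you already wrote but did not act on: ``each $x_j$ ($j\le m$) had all neighbours blue when it hopped, hence is adjacent to no vertex of $S$.'' Simply take $T=\{x_1,\dots,x_m\}$ with $m=(n-k)/2$, i.e.\ the set of \emph{extinct} vertices after $m$ hops (the set $B'_{m+1}$ in the paper's notation), $S$ the set of vertices still white at that moment, and $U$ the remaining $k$ vertices. The $x_j$ are pairwise distinct because a vertex hops at most once, so $|T|=m$; each hop removes one vertex from the white set, so $|S|=(n-k)-m=m$; and $T\cap S=\emptyset$ since every $x_j$ is blue. The no-edge property is then exactly your parenthetical: when $x_j$ hopped, none of its neighbours were white, and every vertex of $S$ was already white at that earlier time, so $x_j$ has no neighbour in $S$. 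With that one identification made explicit, your outline (1)--(5) is a complete proof and coincides with the one in the paper.
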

\begin{proof}
Consider any graph $G=(V,E)$ with $H(G) \le k$. By the definition of the hopping number, there exists a set $B_1 \subseteq V$ of cardinality $k$ such that one can initially colour vertices of $B_1$ blue, and then turn everything blue after a sequence of hops. 

Recall that at time step $t \ge 1$, $B_t$ denotes the set of blue vertices (either active, dormant or extinct) and $W_t$ denotes the set of white vertices. Let $B'_t \subseteq B_t$ be the set of extinct blue vertices at time step $t$, that is, the set of blue vertices that already performed a force. Note that initially $B'_1 = \emptyset$ and $W_1 = V \setminus B_1$. More importantly, each hop increases the cardinality of $B'_t$ by one (one active blue vertex becomes extinct) and decreases the cardinality of $W_t$ by one (one white vertex becomes blue, either active or dormant but certainly not extinct). In particular, at time $t=(n-k)/2$, $|B'_{(n-k)/2}| = |W_{(n-k)/2}| = (n-k)/2$. On the other hand, $B_t \setminus B'_t$ keeps changing during the process but its cardinality is equal to $k$ for all $t$. 

Another important property is that for any $t$, there is no edge between vertices in $B'_t$ and vertices in $W_t$. Indeed, when an active vertex $v \in A_t \subseteq B_t$ performs a force at time $t$, all of its neighbours have to be blue (that is, none of them is in $W_t$). This vertex is moved to $B'_{t+1}$ and, since $W_t$ is only shrinking (that is, $W_1 \supset W_2 \supset \ldots$), the desired property will be preserved to the end of the process. 

The lemma follows by taking the following partition of the vertex set $V$: $S = B'_{(n-k)/2}$, $T = W_{(n-k)/2}$, and $U = B_{(n-k)/2} \setminus B'_{(n-k)/2} = V \setminus (S \cup T)$.
\end{proof}

The above lemma informally tells us that if the hopping number of some graph $G$ is small, then there are two large sets $S$ and $T$ without any edges in between. Such situation does not happen in good expanders, in particular, this property is not satisfied a.a.s.\ for dense random $d$-regular graphs. It provides a lower bound for the hopping number of $\mathcal{G}_{n,d}$ that holds a.a.s. We provide two arguments yielding two corresponding lower bounds. The second bound is stronger for $d \ge 35$.

\medskip

For convenience, we repeat the statement of Theorem~\ref{upperBoundExpanderMixing}.
Let $G=(V,E)$ be a $d$-regular graph with $n$ vertices and set $\lambda = \lambda(G)$. Then,
$$
H(G) \ge \max \left( 1 - \frac {2\lambda}{d}, \frac {d-\lambda}{d+3\lambda} \right) n ~~=~~ \left( 1 - \min \left( \frac {2 \lambda}{d}, \frac {4\lambda}{d+3\lambda} \right) \right) n.
$$ 
As a result, for any $d \ge 3$ and $\eps > 0$, a.a.s. 
$$
H(\mathcal{G}_{n,d}) \ge \left( 1 - \min \left( \frac {4 \sqrt{d-1}}{d}, \frac {8\sqrt{d-1}}{d+6\sqrt{d-1}} \right) -\eps \right) n.
$$ 
\begin{proof}[Proof of Theorem~\ref{upperBoundExpanderMixing}]
Consider any $d$-regular graph $G$ on $n$ vertices and set $\lambda = \lambda(G)$. It follows immediately from the Expander Mixing Lemma (Lemma~\ref{lem:AC}) that for any two disjoint sets $S$ and $T$ of cardinality $(n-k)/2$,
$$
|E(S,T)| \ge \frac {d|S||T|}{n} - \lambda \sqrt{|S||T|} = \frac {d(n-k)^2}{4n} - \lambda \frac {n-k}{2} = \frac {d(n-k)}{4} \left( \frac {n-k}{n} - \frac {2\lambda}{d} \right) > 0,
$$
as long as $k < (1-2\lambda/d) n$. Hence, by Lemma~\ref{lem:big_sets}, we get that 
$$
H(G) \ge \left( 1 - \frac {2\lambda}{d} \right) n.
$$

To get the second lower bound, let us again consider any two disjoint sets $S$ and $T$ of cardinality $(n-k)/2$. It follows from~(\ref{eqn:bisection}) that
$$
|E(S,V \setminus S)| \ge (d-\lambda) \frac{|S||V \setminus S|}{n} = (d-\lambda) \frac{(n-k)(n+k)}{4n},
$$
and the same lower bound holds for $|E(T,V \setminus T)|$. Similarly, using~(\ref{eqn:bisection}) one more time, after setting $U = V \setminus (S \cup T)$, we get that
$$
|E(U,V \setminus U)| \le (d+\lambda) \frac{|U||V \setminus U|}{n} = (d+\lambda) \frac{k(n-k)}{n},
$$
since $|U| = n - (n-k)/2 - (n-k)/2 = k$. We get that 
\begin{eqnarray*}
|E(S,T)| &=& \frac 12 \Big( |E(S,V\setminus S)| + |E(T,V\setminus T)| - |E(U,V\setminus U)| \Big) \\
&\ge& \frac 12 \Big( (d-\lambda) \frac{(n-k)(n+k)}{4n} + (d-\lambda) \frac{(n-k)(n+k)}{4n} - (d+\lambda) \frac{k(n-k)}{n} \Big) \\
&=& \frac {n-k}{4n} \Big( (d-\lambda) (n+k) - 2(d+\lambda) k \Big) \\
&=& \frac {n-k}{4n} \Big( (d-\lambda) n - (d+3\lambda) k \Big) ~~>~~ 0,
\end{eqnarray*}
provided that $k < \frac {d-\lambda}{d+3\lambda} n$, and so by Lemma~\ref{lem:big_sets} we get that
$$
H(G) \ge  \frac {d-\lambda}{d+3\lambda} \, n.
$$

The conclusion for $\mathcal{G}_{n,d}$ follows immediately from Lemma~\ref{lem:Fri} which finishes the proof of the theorem.
\end{proof}

	\section{Lower Bounds From the Configuration Model: $d$-regular Graphs, $d \ge 3$}\label{sec:lower_bounds_config}

We will continue exploiting Lemma~\ref{lem:big_sets} to get stronger (but numerical, not explicit) lower bounds for the hopping number. This time we will use the configuration model (see Subsection~\ref{sec:config}) to show that there are no two large disjoint sets in $\mathcal{G}_{n,d}$ with no edge between them. 

\medskip

For a given integer $d \ge 3$, let 
\begin{eqnarray}
g_d(x,z) &=& \left( \frac {d}{2} -1 - dz \right) x \log(x) + (d-1) (1-x) \log \left( \frac {1-x}{2} \right) \nonumber \\
&& - \, 2dxz\log(z) \nonumber \\
&& - \, \frac {(1-2z)dx}{2} \log(1-2z) \nonumber \\
&& - \, d \left( \frac{1-x}{2}-zx \right) \log \left( \frac{1-x}{2}-zx \right). \label{eq:gd_function}
\end{eqnarray}
For a fixed $x \in (0,1)$, function $g_d(x,z)$ is maximized at
\begin{equation}
z_0(x) := \frac {1 - \sqrt{ 1-2(1-x)x }}{2x}\,. \label{eq:z0}
\end{equation}
(Note that $z_0(x)$ does not depend on $d$; see the proof of Theorem~\ref{thm:configuration} for more details.) Let $h_d(x) = g_d(x,z_0(x))$ be the corresponding maximum value. (For an illustration, we plot functions $h_3(x)$ and $h_{10}(x)$ in Figure~\ref{fig:function_h_d3_d10}.)
\begin{figure}[h]
\centering
\includegraphics[width=0.6\textwidth]{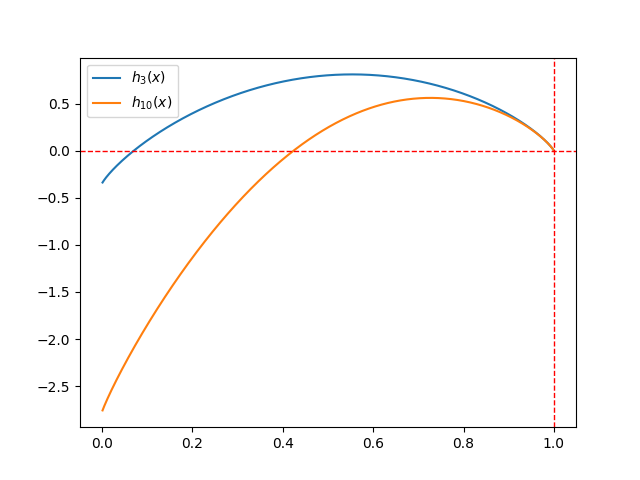}
\caption{Function $h_3(x)$ and $h_{10}(x)$.}\label{fig:function_h_d3_d10}
\end{figure}

\noindent 
Function $h_d(x)$ has the following properties: $\lim_{x \to 0} h_d(x) = - (d-2) \log(2) / 2 < 0$, it is increasing on the interval $(0, \hat{x}]$ (for some $\hat{x} \in (0,1)$), decreasing on the interval $[\hat{x}, 1)$, and $\lim_{x \to 1} h_d(x) = 0$. As a result, there is a unique value $x_d \in (0, 1)$ for which $h_d( x_d ) = 0$. This value can be easily approximated numerically and will play a crucial role in our next result. In light of the above definitions, the following theorem is equivalent to Theorem~\ref{upperBoundConfig}.

\begin{theorem}\label{thm:configuration}
Fix any integer $d \ge 3$ and $\eps > 0$. Let $x_d$ be the unique $x \in (0,1)$ for which $h_d(x)=g_d(x,z_0(x))=0$, where $g_d(x,z)$ and $z_0(x)$ are defined in~(\ref{eq:gd_function}) and in~(\ref{eq:z0}), respectively. Then, a.a.s.\ 
$$
H(\mathcal{G}_{n,d}) > (x_d - \eps)n. 
$$
\end{theorem}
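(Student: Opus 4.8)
The plan is to use the first moment method on the configuration model, leveraging Lemma~\ref{lem:big_sets}: if $H(\mathcal{G}_{n,d}) \le k$, then $V$ can be partitioned into $S, T, U$ with $|S| = |T| = (n-k)/2$, $|U| = k$, and no edge between $S$ and $T$. Setting $k = xn$ for $x \in (0, x_d)$, it suffices to show that a.a.s.\ no such partition exists, i.e.\ that the expected number of ordered partitions $(S, T, U)$ with $|S| = |T| = (1-x)n/2$ and $E(S,T) = \emptyset$ tends to $0$. We work in the pairing model, where this expectation can be written down explicitly as a product of a multinomial-type vertex-assignment factor and a probability that a uniformly random perfect matching on the $dn$ points respects the constraint that no point in a bucket of $S$ is matched to a point in a bucket of $T$.

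The key computation is to estimate this probability. Let $a = |S| = |T| = (1-x)n/2$ and $|U| = xn$; the buckets of $S$ contribute $da$ points, likewise $T$, and $U$ contributes $dxn$ points. Introduce a parameter for the number of $S$--$U$ edges (equivalently $T$--$U$ edges, which by symmetry of the extremal configuration should balance): write the number of points in $S$ matched into $U$ as $2dxz \cdot (n/2) = dxzn$ for some $z$, so that $z$ is (up to scaling) the fraction of $U$'s points that go to $S$. Then the points of $S$ split into those going to $U$ and those staying within $S$ (since none may go to $T$); similarly for $T$; and $U$'s points split among $S$, $T$, and within $U$. Counting perfect matchings with these prescribed cross-counts and dividing by $(dn-1)!! = (dn)!/(2^{dn/2}(dn/2)!)$, then applying Stirling's formula, yields an expression of the form $\exp(n \cdot \Phi_d(x,z) + o(n))$ after we also fold in the $\binom{n}{a, a, xn}$ vertex-choice factor. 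I would carry out this bookkeeping carefully and check that the resulting exponent is exactly $g_d(x,z)$ as defined in~\eqref{eq:gd_function}, summing over the $O(n)$ possible values of $z$ (which costs only a polynomial factor, absorbed into $o(n)$).

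Next I would verify the optimization claims: for fixed $x$, $\partial g_d / \partial z = 0$ gives a quadratic in $z$ whose relevant root is $z_0(x)$ in~\eqref{eq:z0}, independent of $d$ because the $d$-dependence of $g_d$ in $z$ is through an overall factor $d x$ on the $z$-terms (one should check the second-order condition to confirm it is a maximum). Substituting back defines $h_d(x) = g_d(x, z_0(x))$. Using the stated qualitative properties of $h_d$ — namely $h_d(0^+) = -(d-2)\log 2/2 < 0$, unimodal, $h_d(1^-) = 0$, with a unique interior zero $x_d$ — we conclude that for every $x < x_d$ we have $h_d(x) < 0$, hence $\max_z g_d(x,z) < 0$, so the expected number of bad partitions is $\exp(-\Omega(n)) \to 0$. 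By Markov's inequality, a.a.s.\ no bad partition exists, so a.a.s.\ $H(\mathcal{G}_{n,d}) > xn$; taking $x = x_d - \eps$ gives the theorem. Finally, since a.a.s.\ statements over the pairing model transfer to $\mathcal{G}_{n,d}$ (the simple-graph event has probability bounded away from $0$), the conclusion holds for $\mathcal{G}_{n,d}$.

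\textbf{Main obstacle.} The delicate part is the exact entropy bookkeeping — getting the counting of constrained perfect matchings right (correctly handling parity/integrality of the cross-edge counts, loops and multiple edges contributing only lower-order terms, and the distinction between ordered and unordered pairs in the pairing), and then massaging the Stirling expansion so that it matches~\eqref{eq:gd_function} term-for-term, including the somewhat unusual coefficient $(d/2 - 1 - dz)$ on $x\log x$. I would double-check this by specializing to $d = 3$ and comparing against the independent derivation implicit in the $d=3$ analysis, and by a numerical sanity check that the $x_d$ produced matches the values in Table~\ref{tab:UpperAndLowerBoundsHoppingNumber}. A secondary point to handle with care is that Lemma~\ref{lem:big_sets} requires $n - k$ even; this is a harmless rounding issue absorbed into the $\eps$ and the usual rounding convention.
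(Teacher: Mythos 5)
Your proposal follows the paper's proof almost step for step: the reduction via Lemma~\ref{lem:big_sets}, a first-moment count of forbidden partitions in the pairing model, a Stirling/entropy computation of the exponent, optimization in $z$ yielding $z_0(x)$, and the transfer from pairings to $\mathcal{G}_{n,d}$. The one place where you assert something that genuinely requires an argument is the parenthetical ``the number of $S$--$U$ edges (equivalently $T$--$U$ edges, which by symmetry of the extremal configuration should balance).'' The expectation you must bound is a sum over \emph{all} edge-count profiles, so you need two free parameters, say $|E(U,S)|=ydxn$ and $|E(U,T)|=zdxn$, and you must prove that the resulting exponent $f_d(x,y,z)$ is maximized on the diagonal $y=z$ before you are allowed to restrict attention to the one-parameter family $g_d(x,z)=f_d(x,z,z)$. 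Symmetry of $f_d$ under $y\leftrightarrow z$ does not by itself place the maximum on the diagonal (a symmetric function can peak at a pair of off-diagonal points), so as written this step is a gap. The paper closes it by computing the directional derivative of $f_d$ in the direction $(0,1,-1)$ and checking that it is positive when $y<z$; this is a short calculation you should add. With that in place, the rest of your outline --- summing over the polynomially many integer values of both cross-edge counts, identifying $z_0(x)$ as the feasible root of the quadratic from $\partial g_d/\partial z=0$ (your observation that the $z$-dependent terms carry an overall factor $dx$, making $z_0$ independent of $d$, is correct), and concluding from $h_d(x_d-\eps)<0$ --- matches the paper's argument exactly.
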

\begin{proof}
Fix $d \ge 3$ and consider the configuration model generating $\mathcal{G}_{n,d}$. Suppose that for some carefully chosen function $x=x(n)$ ($0 < x < 1$), the expected number $S(x)$ of partitions of $n$ vertices of $\mathcal{G}_{n,d}$ into sets $S$, $T$, and $U$ such that $|S|=|T|=(1-x)n/2$ , $|U|=xn$, with the property that there is no edge between $S$ and $T$ is $o(1)$. Then, it follows from the first moment method that a.a.s.\ there is no such partition in $\mathcal{G}_{n,d}$ and we immediately get from Lemma~\ref{lem:big_sets} that a.a.s.\ $H(\mathcal{G}_{n,d}) > xn$.

Let us fix some auxiliary functions $y=y(n)$ and $z=z(n)$ such that $y > 0$, $z > 0$, $y+z < 1$, $yx < (1-x)/2$, and $zx < (1-x)/2$. Let $S(x,y,z)$ be the expected number of partitions into sets $S$, $T$, and $U$ such that $|S|=|T|=(1-x)n/2$ , $|U|=xn$, with the properties that $|E(U,S)| = ydxn$, $|E(U,T)| = zdxn$, and $|E(S,T)|=0$. It is clear that
\begin{eqnarray*}
S(x,y,z) &=& \binom{n}{xn} \binom{(1-x)n}{(1-x)n/2} \binom{dxn}{ydxn} \binom{d(1-x)n/2}{ydxn} (ydxn)! \\
&& \cdot \binom{(1-y)dxn}{zdxn} \binom{d(1-x)n/2}{zdxn} (zdxn)! \\
&& \cdot M((1-y-z)dxn) M(d((1-x)/2-yx)n) M(d((1-x)/2-zx)n) / M(dn),
\end{eqnarray*}
where $M(i)$ is the number of perfect matchings on $i$ points, that is,
$$
M(i) = \frac {i!}{(i/2)! 2^{i/2}}.
$$
Indeed, we first need to select vertices to form set $U$ (term $\binom{n}{xn}$) and partition the remaining vertices into $S$ and $T$ (term $\binom{(1-x)n}{(1-x)n/2}$). After that we need to select points in $U$ (term $\binom{dxn}{ydxn}$) and points in $S$ (term $\binom{d(1-x)n/2}{ydxn}$) that will be matched in the configuration model with edges between $U$ and $S$; there are $(ydxn)!$ ways to do that. Then, we need to select points from the remaining points in $U$ (term $\binom{(1-y)dxn}{zdxn} $) and points in $T$ (term $\binom{d(1-x)n/2}{zdxn}$) and match them (term $(zdxn)!$) to form edges between $U$ and $T$. Finally, we independently and arbitrarily pair the remaining points in $U$ (term $M((1-y-z)dxn)$), in $S$ (term $M(d((1-x)/2-yx)n)$), and in $T$ (term $M(d((1-x)/2-zx)n)$). Since each of these configurations occurs with the same probability, namely, with probability $1/M(dn)$, we get the expected value by dividing the product of all above terms by $M(dn)$.

After simplification we get
\begin{eqnarray*}
S(x,y,z) &=& n! (dxn)! (d(1-x)n/2)!^2 2^{dn/2} (dn/2)! \\
&& \cdot (xn)!^{-1} ((1-x)n/2)!^{-2} (ydxn)!^{-1} (zdxn)!^{-1} \\
&& \cdot 2^{-(1-y-z)dxn/2} ((1-y-z)dxn/2)!^{-1} \\
&& \cdot 2^{-d((1-x)/2-yx)n/2} (d((1-x)/2-yx)n/2)!^{-1} \\
&& \cdot 2^{-d((1-x)/2-zx)n/2} (d((1-x)/2-zx)n/2)!^{-1} (dn)!^{-1}.
\end{eqnarray*}
Using Stirling's formula ($i! \sim \sqrt{2\pi i} (i/e)^i$) we obtain
$$
S(x,y,z) = \Theta(n^{-2}) \exp \Big( f_d(x,y,z) n \Big), 
$$
where
\begin{eqnarray*}
f_d(x,y,z) &=& \left( d-1 - d(y+z) - \frac {(1-y-z)d}{2} \right) x \log(x) \\
&& + (d-1) (1-x) \log \left( \frac {1-x}{2} \right) \\
&& - dxy\log(y) - dxz\log(z) \\
&& - \frac {(1-y-z)dx}{2} \log(1-y-z) \\
&& - \frac {d}{2} \left( \frac{1-x}{2}-yx \right) \log \left( \frac{1-x}{2}-yx \right) 
- \frac {d}{2} \left( \frac{1-x}{2}-zx \right) \log \left( \frac{1-x}{2}-zx \right).
\end{eqnarray*}

Not surprisingly, for a fixed $x \in (0,1)$, the function $f_d(x,y,z)$ is maximized when $z=y$. To see it, for example, we may compute the directional derivative of $f_d(x,y,z)$ in the direction $(0,1,-1)$, which is equal to 
$$
\frac {dx}{2} \left( 2 \Big( \log(z) - \log(y) \Big) + \log \left( \frac {1-x}{2} -xy \right) - \log \left( \frac {1-x}{2} -xz \right) \right).
$$
Clearly, if $y<z$, then this derivative is positive which implies that the maximum is obtained when $z=y$. This motivates the definition~(\ref{eq:gd_function}) of $g_d(x,z)$, since $g_d(x,z) = f_d(x,z,z)$.  To maximize $g_d(x,z)$, we compute the derivative with respect to $z$:
$$
\frac {\partial g(x,z)}{ \partial z} = dx \log \left( \frac {(1-x-2zx)(1-2z)}{2xz^2} \right). 
$$
It follows that $\frac {\partial g(x,z)}{ \partial z} = 0$ if and only if $(1-x-2zx)(1-2z) = 2xz^2$. By solving this quadratic equation we get that 
$$
z = \frac {1/x \pm \sqrt{ \Delta }}{2}, \qquad \text{where} \qquad \Delta = \frac {1}{x^2} - \frac {2}{x} + 2.
$$
The constraint $y+z<1$ implies that $z < 1/2$ and, since the larger root is at least $1/(2x) > 1/2$, it is not a feasible solution. This motivates the definition of~(\ref{eq:z0}), as $z_0(x) = (1/x-\sqrt{\Delta})/2$.

We conclude that for any $0 \le y,z \le 1$ satisfying our constraints, 
$$
f_d( x, y, z) \le h_d(x) = g_d(x, z_0(x)) = f_d( x, z_0(x), z_0(x) ). 
$$
Finally, recall that $x_d \in (0, 1)$ is the unique value of $x$ for which $h_d( x ) = 0$. Moreover, $h_d( x_d - \eps ) < 0$. As a result, the expected number $S(x_d-\eps)$ of partitions of the vertex set into sets $S$, $T$, and $U$ such that $|S|=|T|=(1-x_d+\eps)n/2$ , $|U|=(x_d-\eps)n$, with the property that there is no edge between $S$ and $T$ can be estimated as follows
\begin{eqnarray*}
S(x_d - \eps) &=& \sum_{y,z} S(x_d-\eps,y,z) = \sum_{y,z} \bigo( n^{-2}) \exp \Big( f_d(x_d - \eps,y,z) n \Big) \\
&=& \sum_{y,z} \bigo( n^{-2}) \exp \Big( h_d(x_d - \eps) n \Big) = \bigo( 1 ) \exp \Big( h_d(x_d - \eps) n \Big) \\
&=& \bigo( 1 ) \exp \Big( - \Omega ( n ) \Big) = o(1).
\end{eqnarray*}
Hence, a.a.s.\ there is no partition with this property and Lemma~\ref{lem:big_sets} implies that a.a.s.\ $H(\mathcal{G}_{n,d}) > (x_d - \eps)n$.  This finishes the proof of the theorem. 
\end{proof}

\section{Upper Bound from the Degree-greedy Algorithm: $3$-regular graphs} \label{sec:DEattempt}

In this section, we assume that $d=3$ is fixed with $dn$ even. In order to get an asymptotically almost sure upper bound on the hopping number, we study an algorithm that selects random vertices of minimum degree and tries to hop from them. This algorithm is called \emph{degree-greedy} because the vertex is chosen from those with the lowest degree. Similar technique was successfully used to estimate the brush number of random $d$-regular graphs~\cite{alon2009cleaning}. We are not as successful in our present problem but we do hope that one can modify our algorithm, and use similar techniques to analyze it, to get better bounds than the ones we managed to prove. We illustrate our ideas in the simplest case, namely, for $d=3$.

We start with a random $3$-regular graph $G=(V,E)$ on $n$ vertices, and we will work with the configuration model. During the process, we will keep track of a set $D_t$ of vertices that have at least one point that is still unmatched. Vertices in $D_t$ will be considered as potential candidates to hop from. The process will ensure that these vertices have not perform a force yet. Moreover, white vertices are easy to identify, namely, they have 3 unmatched points. 

Initially, $D_0=V$. In every step $t$ of the process, we select a random vertex $\alpha_t$, chosen uniformly at random from those vertices with the lowest degree in the induced subgraph $G[D_{t-1}]$ on unexposed points. We expose the neighbours associated with the remaining points of $\alpha_t$, make them blue (if they are still white), and try to hop through one of them. To be able to do it, one of these neighbours has to have at least one remaining unchosen point which is associated with a white vertex. Regardless, whether we succeed to hop or not, $\alpha_t$ has to be removed from $D_t$ together with other vertices that got all points exposed. 

In the first step, a vertex of degree 3 is selected to hop from.  Three of its neighbours become blue and we hop through one of them, making another vertex blue. (A.a.s.\ there is no triangle around the initial vertex.) The induced subgraph $G[D_1]$ now has $1$ vertex of degree $1$, $3$ vertices of degree $2$, and $n-5$ vertices of degree $3$. Note that $\alpha_1$ is a.a.s.\ the only vertex whose degree in $G[D_t]$ is $3$ at the time of selecting a vertex to hop from. Indeed, if $\alpha_t$ ($t \ge 2$) has degree $3$ in $G[D_{t-1}]$, then $G[D_{t-1}]$ consists of some connected components of $G$ and thus $G$ is disconnected. It was proven in~\cite{wormald1981asymptotic} that for constant $d$, $G$ is disconnected with probability $o(1)$ (this also holds when $d$ is growing with $n$, as shown in~\cite{luczak1992sparse}). 

In the second step, we try hop from the vertex of degree $1$. A.a.s.\ its neighbour is white and after hopping through it, this vertex becomes of degree $1$. However, when vertices of degree $2$ become plentiful, we will be hopping through them often (making them of degree $0$) and we might run out of vertices of degree $1$. We will start hopping from vertices of degree $2$ but eventually come back to hopping from vertices of degree $1$. Our goal is to control such ``hiccups''. The details of the application of the differential equations method to such degree-greedy algorithms have been omitted, but can be found in~\cite{wormald2003analysis}.

\medskip

For $0\le i\le 3$, let $Y_i=Y_i(t)$ denote the number of vertices with $i$ unmatched points at time~$t$. (Note that $Y_0(t)=n-\sum_{i=1}^3 Y_i(t)$ so $Y_0(t)$ does not need to be calculated, but it is useful in the discussion.) Let $S(t)=\sum_{i=1}^3 i Y_i(t)$ be the total number of points that are unmatched at time $t$. It is tedious but not so difficult to consider all the cases to see that for $r \in \{1, 2\}$ and $i \in \{0, 1, 2, 3\}$,
\begin{align}
\E \Big( Y_i(t+1)-Y_i(t) & ~|~ G[D_{t}] ~\wedge~ \deg_{G[D_{t}]}(\alpha_{t+1}) = r \Big) \nonumber \\
& = f_{i,r} (t/n, Y_1(t)/n, Y_2(t)/n, Y_3(t)/n) \nonumber \\
& = \sum_{j} p_r^{(j)} \Delta Y_i^{(j)}, 
\label{eq:expected_change}
\end{align}
where $p_r^{(j)}$ and $\Delta Y_i^{(j)}$ are presented in Tables~\ref{table_r1} and~\ref{table_r2} (the sum is over all possible cases: $j \in [5]$ for $r=1$, and $j \in [12]$ for $r=2$). Indeed, let us explain, for example, the very first case, $r=1$ and $j=1$. We try to hop from a vertex of degree 1. Its neighbour had degree~3 with probability $3Y_3(t) / (S(t)-1) \sim 3Y_3(t) / S(t)$ but will have degree 1 once we hop through it. Its neighbour is white with probability $3(Y_3(t)-1) / (S(t)-3) \sim 3Y_3(t) / S(t)$. Hence, this case happens with probability asymptotic to $p_1^{(1)} = (3Y_3(t)/S(t))^2$. We can hop there, making this effort count. Three vertices changed their degrees. Initially we have one vertex of degree 1 and 2 of degree 3. After the hop, one of them became of degree 0, one of degree 1, and one of degree 2, explaining the $\Delta Y_i^{(1)}$'s. Note that sometimes we are not able to make a hop, as none of the vertices in the second neighbourhood were initially white. We indicate whether the hop occurred or not in the last column in the two corresponding tables.

{\scriptsize
\begin{table}[htp]
\caption{Vertex of degree $r=1$ tries to hop ($5$ cases)}
\begin{center}
\begin{tabular}{|c|c|c|c|c|c|c|}
\hline
$j$ & $p_1^{(j)}$ & $\Delta Y_0^{(j)}$ & $\Delta Y_1^{(j)}$ & $\Delta Y_2^{(j)}$ & $\Delta Y_3^{(j)}$ & succesful hop? \\
\hline
1 & $\left( \frac{3Y_3(t)}{S(t)} \right)^2$ & $1$ & $0$ & $1$ & $-2$ & Yes \\
\hline
2 & $\left( \frac{3Y_3(t)}{S(t)} \right)^2 \left( \frac{2Y_2(t)}{S(t)}\right)$ & $2$ & $0$ & $0$ & $-2$ & Yes \\
\hline
3 & $\left( \frac{3Y_3(t)}{S(t)} \right) \left( \frac{2Y_2(t)}{S(t)}\right)^2$ & $2$ & $1$ & $-2$ & $-1$ & No \\
\hline
4 & $\left( \frac{3Y_3(t)}{S(t)} \right) \left( \frac{2Y_2(t)}{S(t)}\right)$ & $2$ & $-1$ & $0$ & $-1$ & Yes \\
\hline
5 & $\left( \frac{2Y_2(t)}{S(t)} \right)^2$ & $2$ & $0$ & $-2$ & $0$ & No \\
\hline
\end{tabular}
\end{center}
\label{table_r1}
\end{table}%
}

{\scriptsize
\begin{table}[htp]
\caption{Vertex of degree $r=2$ tries to hop ($12$ cases)}
\begin{center}
\begin{tabular}{|c|c|c|c|c|c|c|}
\hline
$j$ & $p_2^{(j)}$ & $\Delta Y_0^{(j)}$ & $\Delta Y_1^{(j)}$ & $\Delta Y_2^{(j)}$ & $\Delta Y_3^{(j)}$ & succesful hop? \\
\hline
1 & $\left( \frac{3Y_3(t)}{S(t)} \right)^2 \left( \frac{2Y_2(t)}{S(t)}\right)$ & $1$ & $1$ & $1$ & $-3$ & Yes \\
\hline
2 & $\left( \frac{3Y_3(t)}{S(t)} \right)^3 \left( \frac{2Y_2(t)}{S(t)}\right)$ & $2$ & $1$ & $0$ & $-3$ & Yes \\
\hline
3 & $\left( \frac{3Y_3(t)}{S(t)} \right)^3 \left( \frac{2Y_2(t)}{S(t)}\right)^2$ & $2$ & $3$ & $-2$ & $-3$ & Yes \\
\hline
4 & $\left( \frac{3Y_3(t)}{S(t)} \right)^3 \left( \frac{2Y_2(t)}{S(t)}\right)^3$ & $3$ & $3$ & $-3$ & $-3$ & Yes \\
\hline
5 & $\left( \frac{3Y_3(t)}{S(t)} \right)^2 \left( \frac{2Y_2(t)}{S(t)}\right)^4$ & $3$ & $4$ & $-5$ & $-2$ & No \\
\hline
6 & $2 \left( \frac{3Y_3(t)}{S(t)} \right)^2 \left( \frac{2Y_2(t)}{S(t)}\right)$ & $2$ & $0$ & $0$ & $-2$ & Yes \\
\hline
7 & $2 \left( \frac{3Y_3(t)}{S(t)} \right)^2 \left( \frac{2Y_2(t)}{S(t)}\right)^2$ & $2$ & $2$ & $-2$ & $-2$ & Yes \\
\hline
8 & $2 \left( \frac{3Y_3(t)}{S(t)} \right)^2 \left( \frac{2Y_2(t)}{S(t)}\right)^3$ & $3$ & $2$ & $-3$ & $-2$ & Yes \\
\hline
9 & $2 \left( \frac{3Y_3(t)}{S(t)} \right) \left( \frac{2Y_2(t)}{S(t)}\right)^4$ & $3$ & $3$ & $-5$ & $-1$ & No \\
\hline
10 & $\left( \frac{3Y_3(t)}{S(t)} \right) \left( \frac{2Y_2(t)}{S(t)}\right)^2$ & $2$ & $1$ & $-2$ & $-1$ & Yes \\
\hline
11 & $\left( \frac{3Y_3(t)}{S(t)} \right) \left( \frac{2Y_2(t)}{S(t)}\right)^3$ & $3$ & $1$ & $-3$ & $-1$ & Yes \\
\hline
12 & $\left( \frac{2Y_2(t)}{S(t)}\right)^4$ & $3$ & $2$ & $-5$ & $0$ & No \\
\hline
\end{tabular}
\end{center}
\label{table_r2}
\end{table}%
}

Suppose that at some step $t$ of the process, hopping from a vertex of degree $2$ creates, in expectation, $\beta$ vertices of degree
$1$ and hopping from a vertex of degree $1$ decreases, in expectation, the number of vertices of degree $1$ by $\tau$. After hopping from a vertex of degree $2$, we expect to then hop (on average) from $\beta/\tau$ vertices of degree $1$. Thus, the proportion of steps which hop from vertices of degree $2$ is $1/(1+\beta/\tau) = \tau/(\beta+\tau)$. If $\tau$ falls below zero, vertices of degree $1$ begin to build up and do not decrease under repeated hopings of this type and we move to the next phase.

From~(\ref{eq:expected_change}) it follows that
\begin{eqnarray*}
\beta &=& \beta(x,y_1,y_2, y_3) ~~=~~ f_{1,2}(x,y_1,y_2, y_3) ~~=~~ f_{1,2} (x,\textbf{y}),\\
\tau &=& \tau(x,y_1,y_2, y_3) ~~=~~ -f_{1,1}(x,y_1,y_2, y_3) ~~=~~ -f_{1,1} (x,\textbf{y}),
\end{eqnarray*}
where $x=t/n$ and $y_i(x)=Y_i(t)/n$ for $i \in [3]$. This suggests the following system of differential equations
$$
\frac {d y_i}{dx} = F(x,\textbf{y},i)
$$
where
\begin{equation*}
F(x,\textbf{y},i) = \frac{\tau}{\beta+\tau} f_{i,2}(x,\textbf{y}) + \frac {\beta}{\beta+\tau} f_{i,1}(x,\textbf{y}).
\end{equation*}
At this point we may formally define the termination point $\hat{x}$ is defined as the infimum of those $x>\hat{x}$ for which at least one of the following holds: $\tau \le 0$, $\tau+\beta=0$, or $y_{2} \le 0$. The initial conditions are $y_3(0)=1$ and $y_i(0)=0$ for $i \in \{0, 1, 2\}$.

The general result~\cite[Theorem~1]{wormald2003analysis} studies a deprioritized version of degree-greedy algorithms, which means that the vertices are chosen  to process in a slightly different way, not always the minimum degree, but usually a random mixture of two degrees. Once a vertex is chosen, it is treated the same as in the degree-greedy algorithm. The variables $Y$ are defined in an analogous manner. The hypotheses of the theorem are mainly straightforward to verify but require several inequalities involving derivatives to hold at the termination of phases, for the full rigorous conclusion to be obtained. However, in practice, the equations are simply solved numerically in order to find the points $\hat{x}$, since a fully rigorous bound is not obtained unless one obtains strict inequalities on the values of the solutions. The conclusion is that, for a certain algorithm using a deprioritized ``mixture'' of the steps of the degree-greedy algorithm, with variables $Y_i$ defined as above, we have that a.a.s.
$$
Y_i(t) = n y_i(t/n)+o(n)
$$
for $0 \le i\le 3$. We omit all details, pointing the reader to~\cite{wormald2003analysis} and Subsection~\ref{sec:DEmethod} for the differential equations method which is a main tool in proving the result.  In addition, the theorem gives information on an auxiliary variable such as, of importance to our present application, the number of vertices that actually hoped. 

\medskip

The numerical solution to the relevant differential equations is shown in Figure~\ref{fig:DEMethodGraph}.  During the process, only vertices of degree 1 and 2 attempt to make a hop. Since we prioritize vertices of degree~1, the function $y_1$ responsible for ``monitoring'' such vertices is equal to zero. Note that it means that a.a.s.\ there are only $o(n)$ vertices of degree 1 at any point of the process, not that we do not see them at all (we clearly do). As expected, the number of vertices of degree 3 (represented by $y_3$) decreases, and the number of vertices of degree 0 (represented by $y_0$) increases. The number of vertices of degree 2 (represented by $y_2$) initially increases but then it decreases. A.a.s., at time $\hat{x}n \approx 0.6614n$ the process stops with all vertices being of degree 0. By investigating the auxiliary random variable, and the associated differential equation, we conclude that a.a.s.\ approximately $0.5159n$ vertices hop, yielding an upper bound for the hoping number of approximately $(1-0.5159)n = 0.4841n$. Unfortunately, it is a much weaker upper bound than the one we obtained by investigating the contiguous model, namely, $(1+o(1)) n/3$.

\begin{figure}
        \centering
       \includegraphics[width=0.6\textwidth]{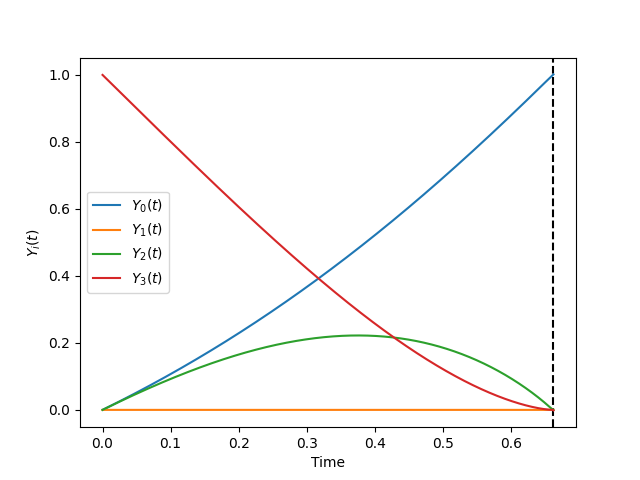}
        \caption{Evolution of $Y_i(t)$, $0\leq i\leq3$ for $d=3$ using the Differential Equation Method.}
        \label{fig:DEMethodGraph}
\end{figure}

\bibliographystyle{plain}
	
	\bibliography{refs.bib}
	
\end{document}